\definecolor{darkgreen}{rgb}{0.0, 0.4, 0.0}
\definecolor{cyan}{cmyk}{1,0,0,0}
\newcommand{\cm}{\color{magenta}}
\newcommand{\bdg}{\begin{dg}}
\newcommand{\edg}{\end{dg}}
\newtheorem{tm}{Theorem}[subsection]
\newtheorem{lm}[tm]{Lemma}
\newtheorem{pr}[tm]{Proposition}
\newtheorem{rmk}[tm]{Remark}
\newtheorem{cor}[tm]{Corollary}
\newtheorem{ex}[tm]{Example}
\newtheorem{??}[tm]{Question}
\newtheorem{setup}[tm]{Set-up}
\newcommand{\ben}{\begin{enumerate}}
\newcommand{\een}{\end{enumerate}}
\newcommand{\bit}{\begin{itemize}}
\newcommand{\eit}{\end{itemize}}
\newcommand{\beq}{\begin{equation}}
\newcommand{\eeq}{\end{equation}}
\newcommand{\la}{\label}
\newcommand\ci{\cite}
\font\tenmsb=msbm10
\font\sevenmsb=msbm7
\font\fivemsb=msbm5
\def\Bbb#1{{\fam\msbfam #1}}
\font\teneufm=eufm10
\font\seveneufm=eufm7
\font\fiveeufm=eufm5
\newcommand{\ke}{ \hbox{\rm Ker} }
\newcommand\rat{{\Bbb Q}}
\newcommand\oql{\overline{\Bbb Q}_\ell}
\newcommand\comp{{\Bbb C}}
\newcommand\zed{{\Bbb Z}}
\newcommand\Om{\Omega}
\newcommand\e{\epsilon}
\newcommand{\w}[1]{\widetilde{#1}}
\newcommand{\ov}[1]{\overline{#1}}
\newcommand{\m}[1]{\mathcal{#1}}
\newcommand{\hm}{M}
\newcommand{\ha}{A}
\newcommand{\hh}{h}
\newcommand{\Gm}{{\mathbb G}_m}
\newcommand{\X}{X}
\newcommand{\SP}{S}
\newcommand{\fa}{\X/S}
\newcommand{\G}{G}
\newcommand{\md}[2]{M_D({#1},{#2})}
\newcommand{\ma}[2]{A({#1},{#2})}
\newcommand{\h}{h}
\title{Projective compactification of \\ Dolbeault moduli spaces}
\author{
Mark Andrea A.  de Cataldo}
\address{Mark Andrea A. de Cataldo, Stony Brook University}
\email{mark.decataldo@stonybrook.edu}
\begin{document}

\begin{abstract}
We construct a relative projective compactification of  Dolbeault moduli spaces of Higgs bundles  for reductive algebraic groups 
on families of projective manifolds   that  is compatible with the Hitchin morphism.
\end{abstract}

\maketitle

\tableofcontents

\section{Introduction}\la{intro}

The purpose of this paper is to prove Theorem \ref{sicomp}, which provides a natural  projective compactification of Simpson's Dolbeault moduli spaces of Higgs bundles for complex reductive algebraic groups on projective manifolds. The compactification statement seems to be folklore, but we could not locate a reference in the literature. The projectivity assertion seems new.
Remark \ref{other} discusses the earlier work we are aware of; 
\S\ref{asco} discusses in more detail the relation of this work to the work of A. Schmitt.
In the course of proving our main result, we  establish some complements which can be of independent interest.
Next, we discuss in more detail the contents of this paper.

The Dolbeault moduli space for a reductive algebraic group $G$ for a family $\fa$    of projective manifolds 
 is quasi projective over the base $\SP$.
The associated Hitchin morphism is proved to be proper in the case $G=GL_n$ by Simpson. 

We observe in Proposition
\ref{hitprop} that
the Hitchin morphism is proper, in fact projective, for every reductive algebraic group $G$. The properness assertion has been independently proved for families of curves in arbitrary characteristic in 
\ci{ahh}. The target of the Hitchin morphism is a global version for the family $\fa$ of the
quotient $\mathbbm g /\!/ G$. In this context, the Chevalley restriction morphism being an isomorphism
plays an important role, albeit not a direct one in this paper. Since we could not locate in the literature a
reference for this fact in the case $G$ reductive algebraic, we offer a proof in Lemma \ref{crm}. Our proof of the properness of the  Hitchin morphism consists of exhibiting it as the first link in a  factorization of another proper morphism. Since the second link is of great Lie-theoretic importance, we show it  is  a finite morphism
in Proposition \ref{gcrt}.

Proposition \ref{rg90} constructs a natural complex on the Dolbeault moduli space that, locally over the base $\SP$, is the box product of the intersection complex of a typical fiber (via the Non Abelian Hodge Theorem,
the Dolbeault moduli space is topologically locally trivial over the base) with the constant sheaf over the base.
Once this is done, the last assertion of the proposition, i.e. the vanishing  $\phi F =0$ of the vanishing cycle, 
follows directly.

The main  result of this paper is the compactification  Theorem \ref{sicomp}, the proof of which is spread-out through several subsections
of \S\ref{doe45}. 
We use Simpson's compactification  Theorem \ref{siomp} in the context of suitable $\Gm$-actions, of which we need the  amplification provided by Proposition 
\ref{sicompv};  this slight improvement  also allows to incorporate the Hitchin morphism in the compactification framework.
\S \ref{achyt} constructs  the desired  compactification.
Away from the nilpotent cone, i.e. the fiber of the Hitchin morphism over the unique  $\Gm$-fixed point of the Hitchin base, the stabilizers  of the natural $\Gm$-action on the Dolbeault moduli space are finite;
when the Dolbeault moduli space is an orbifold (this is rare, but it happens in very interesting cases; see Remark \ref{sm}), Lemma \ref{orb} allows to deduce that the compactification is an orbifold as well; in this context, we could not locate in the literature a needed technical statement, hence the lemma, which was suggested to us by M. Brion.
\S\ref{dcv} contains the proof of our main Theorem \ref{sicomp}. Proposition \ref{vcb1} contains
some topological complements that our compactification affords when the Dolbeault moduli space is an orbifold (cf. Theorem \ref{sicomp}.(6)).

 As it is pointed out in Remark \ref{H moduli space},  Theorem \ref{sicomp}, parts 1-5 holds in the more general context of $\Lambda$-modules,
with $\Lambda$ of polynomial type. The proofs are identical. The case of Higgs sheaves is then a special case, and the one of Dolbeault moduli spaces is an even more special case. We have decided to write this paper in the context of Dolbeault moduli spaces because of the extra appeal stemming from the Non Abelian Hodge Theorem.

{\bf Acknowledgments.}
The author  thanks: Leticia Brambila Paz, Michel Brion, Victor Ginzburg, Jochen Heinloth, Andrea Maffei, Luca Migliorini, Mircea Musta\c t\u a,  Daniel Bergh,  J\"org Sch\"urmann , Carlos Simpson and Geordie Williamson
for useful conversations. The  author, who is partially supported by N.S.F. D.M.S. Grants n. 1600515 and 1901975, would also like to thank the Freiburg Research Institute for Advanced Studies  for the perfect working conditions; the research leading to these results has received funding from the People Programme (Marie Curie Actions) of the European Union's Seventh Framework Programme
(FP7/2007-2013) under REA grant agreement n. [609305].    I thank the anonymous referees for their suggestions, especially in connection with
their constructive criticism of the original proof of Theorem \ref{sicomp}.(1,2) (projectivity of the compactification).

\subsection{Notation}\la{notation}$\;$

We work over the field of complex numbers $\comp$. A variety is a separated scheme of finite type
over $\comp$. All varieties in this paper turn out to be quasiprojective over any chosen base variety $S$.

A standard reference for Higgs bundles and Dolbeault moduli spaces is \ci{si1,si2}.
For the derived category of constructible sheaves, we refer the reader to \ci{bams}.
For vanishing cycles,  we refer the reader to \ci{dema}.

\section{Dolbeault moduli spaces: review and complements}\la{rcdm}

In this section, we review Simpson's Dolbeault moduli spaces.
The main reference for this section is  \ci{si2} where, among other things, C. Simpson proves the Non Abelian Hodge Theorem in families over a base $S$.  This section also contains some folklore complements that do not seem to be   documented in the literature we are aware of: projectivity of the Hitchin morphism for reductive algebraic groups (Proposition \ref{hitprop}); the Chevalley restriction isomorphism  (Lemma \ref{crm}); the finiteness assertion of Proposition \ref{gcrt}.  Proposition \ref{rg90} constructs a complex on the Dolbeault moduli space for a  family of projective manifolds  that restricts to the intersection cohomology complexes on the fibers; this seems new.

In this section, we place ourselves in the following: 
\begin{setup}\la{setup}
Let $\G$ be a complex reductive algebraic group. 
Let $\fa$ be a smooth projective morphism (family).  
\end{setup}

Given a point  $s \in \SP$, we denote   by $\X_s$ the corresponding member of the family. 
More generally, a subscript
$-_s$, with $s\in S$, indicates the restriction of an object to the corresponding fiber.


\subsection{The Dolbeault moduli space}\la{dms}$\;$

Let $\md{\fa}{\G}/\SP$ be the  relative Dolbeault moduli space associated with the reductive algebraic group $\G$ and  the family $\fa$,
and let:
\beq\la{dmsp}
\xymatrix{
 \pi_D(\fa, \G): \md{\fa}{\G} \ar[r] & \SP
}
\eeq
 be the structural morphism.  This moduli space universally corepresents the appropriate functor.
If $s \in \SP$, then the fiber $\pi_D(\fa,\G)^{-1}(s)$
 is the Dolbeault moduli space  $\md{\X_s}{\G}$ associated with $G$ and  $\X_s$. 


For the case $\G=GL_n$, see \ci[pp.16-17]{si2} and \ci[Theorem 4.7]{si1}; the Dolbeault moduli space is obtained
as a good quotient of a parameter space $Q$ by the action of a special linear group. The morphism $\pi_D(\fa, GL_n)$ is quasi-projective (cf.
\ci[Theorem 4.7]{si1}),
and the  closed points in  $\md{\X_s}{GL_n}$ parameterize Jordan equivalence classes of $\mu$-semistable Higgs
bundles of rank $n$ on $\X_s$ with vanishing rational Chern classes $c_i$, $\forall i >0$; (see \ci[p.17]{si2}).
There is also the construction stemming from \ci[Proposition  9.7]{si2} and \ci[Theorem 4.10]{si1}, where the Dolbeault moduli space arises in connection with good and geometric quotients of  Dolbeault representation spaces modulo the action of $GL_n$; this construction
is used in the construction of moduli spaces for $\G$ reductive \ci[Proposition  9.7]{si2},  via a closed embedding of $\G$ into some $GL_n$.
There is also the construction relating Higgs sheaves to sheaves in the relative cotangent bundle \ci[p.18]{si2}, which Simpson
uses to prove the properness of the Hitchin morphism for $\G=GL_n$.

For $G$ reductive algebraic, the morphism $\pi_D(\fa,\G)$ is again quasi-projective: combine \ci[Proposition 9.7]{si2}, \ci[Corollary 9.19]{si2}
and
\ci[\href{https:/\!/stacks.maTheoremcolumbia.edu/tag/0417}{Tag 0417, Pr. 58.49.2}]{stacks-project}.  The closed points  in $\md{\X_s}{\G}$ 
parameterize the set of isomorphism classes of principal Higgs bundles of semiharmonic type on $\X_s$ for the reductive algebraic  group $G$ (cf. \ci[Proposition 9.7]{si2}).

\begin{rmk}\la{var}
{\rm ({\bf Higgs vector bundles over curves})}
If $\fa$ is a family of smooth projective curves of genus $g \geq 2$ and  $\G=GL_n$,
then, fiberwise over $\SP$,  the Dolbeault moduli spaces $\md{\X_s}{\G}$  are integral and normal
see \ci[Corollary 11.7]{si2}.
\end{rmk}

The Dolbeault moduli spaces of a smooth projective variety  are seldom nonsingular: the only case I know of is
the case $\G=GL_1$, where the moduli space is the cotangent bundle to ${\rm Pic}^0$.

\begin{rmk}\la{sm} {\rm ({\bf Variant: Higgs vector bundles   over curves with   degree coprime to the rank})}
 The following variant of Dolbeault  moduli spaces are nonsingular and 
connected, moreover,  the analogue of the Non Abelian Hodge Theorem holds for them:
$\fa$ is a family of  projective connected nonsingular \underline{curves} of genus $g \geq 2$, the reductive algebraic group  $\G=GL_n, SL_n$, and we consider 
stable Higgs bundle of degree coprime to the rank. For $\G=PGL_n$ one gets the quotient
of the $SL_n$-moduli space by the  abelian group scheme ${\rm Pic}^0_{\fa}[n]$,  which is finite over $\SP$.  See \ci{dhm} and the references therein, and \ci[\S6]{si}.
\end{rmk}

\subsection{Projectivity of the Hitchin morphism}\la{dms11}$\;$

When $\G=GL_n$,
the Hitchin morphism 
\beq\la{eqhm}
\xymatrix{
\h (\fa,\G): \md{\fa}{\G} \ar[r] &  \ma{\fa}{\G}
}
\eeq
is defined   in \ci[p.22]{si2}.  Here, $\ma{\fa}{GL_n}$ is the scheme representing the functor sending
$S'/S$ to $\oplus_{i=1}^n H^0 (X':=\X \times_\SP \SP', {\rm Sym}^i \, \Om^1_{\X'/\SP})$; according to general facts, this representing scheme is a cone ${\rm Spec}_{\m{O}_\SP} (\m{Q})$ over $\SP$, for a suitable coherent $\m{O}_\SP$-module
$\m{Q}=\oplus_i \m{Q}_i$ (e.g. cf.  \ci[Lemma 3.1.3]{wang}).  In short: first, one chooses
a homogeneous system of generators $(f_i)_{i=1}^n \subseteq \comp [\mathbbm g \mathbbm l_n /\!/ G] = \comp [\mathbbm g \mathbbm l_n]^{GL_n}\subseteq  \comp [\mathbbm g \mathbbm l_n]$  of degree $i$, e.g.
${\rm trace}( \wedge^i(-))$; then, given
a Higgs bundle $(E, \phi)$ on $X'/\SP'$, one combines the  $f_i$ with the twisted endomorphism $\phi$  to define the sections of 
${\rm Sym}^i \, \Om^1_{\X'/\SP}$. 
In the case where $G$ is reductive algebraic, one defines the Hitchin morphism in the same way,
by  choosing a homogeneous system of generators $f_j  \in \comp [\mathbbm g]^{\G}\subseteq  \comp [\mathbbm g]$
with degrees $d_j$ given by the fundamental degrees of $\mathbbm g$.

\begin{rmk}\la{ig}
Note that  for $s \in \SP$, we have that
$\ma{\fa}{\G}_s= \ma{\X_s}{\G}$.
Let $\SP$ be connected. When $\dim{X/S} \geq 2$,   the dimensions $h^0 (\X_s, {\rm Sym}^i \Om^1_{\X_s})$ may jump up; see \ci[\S4.2]{Ro-Ro}.
These dimensions do not  jump  when $\dim{X/S}=1$, i.e. for families of curves.  Regardless of the relative dimension $\dim{X/S}$ of the family $X/S$, by Hodge Theory, these dimensions do not jump  for $i=1$. 

\end{rmk}

\begin{pr}\la{hitprop}  {\rm ({\bf Projectivity of the Hitchin morphism (\ref{eqhm})})}
The Hitchin morphism (\ref{eqhm}))
 is  projective.
\end{pr}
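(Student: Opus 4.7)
The plan is to deduce the assertion from Simpson's theorem in the case $G=GL_n$ by factoring $\h(\fa,G)$ through a finite morphism of Hitchin bases. Fix a faithful representation $\rho: G \hookrightarrow GL_n$. Extension of structure group along $\rho$ takes a principal $G$-Higgs bundle to a $GL_n$-Higgs bundle and induces a morphism $\rho_*: \md{\fa}{G} \to \md{\fa}{GL_n}$. Restriction of $GL_n$-invariant polynomials on $\mathbbm g\mathbbm l_n$ to the Lie subalgebra $\mathbbm g$ produces $G$-invariant polynomials, and this globalizes to a morphism $\alpha: \ma{\fa}{G} \to \ma{\fa}{GL_n}$ of Hitchin bases. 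Since the Hitchin morphism is defined by applying invariant polynomials to the Higgs field, these data fit into a commutative square
\[
\xymatrix{
\md{\fa}{G} \ar[r]^{\rho_*} \ar[d]_{\h(\fa,G)} & \md{\fa}{GL_n} \ar[d]^{\h(\fa,GL_n)} \\
\ma{\fa}{G} \ar[r]_{\alpha} & \ma{\fa}{GL_n}.
}
\]

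By Simpson, $\h(\fa,GL_n)$ is proper; since $\md{\fa}{GL_n}\to S$ is quasi-projective and $\ma{\fa}{GL_n}\to S$ is affine, $\h(\fa,GL_n)$ is in fact projective. Simpson's construction of $\md{\fa}{G}$ via the embedding $G\hookrightarrow GL_n$ (\ci[Proposition 9.7]{si2}) realises $\rho_*$ as the morphism on good GIT quotients induced by a closed embedding of parameter schemes, from which one extracts that $\rho_*$ is proper. Hence the composition $\h(\fa,GL_n)\circ\rho_* = \alpha\circ\h(\fa,G)$ is proper. The crucial Lie-theoretic input is Proposition \ref{gcrt}, which provides that $\alpha$ is a finite morphism, hence separated. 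The standard fact that, for composable morphisms $f$ and $g$, properness of $g\circ f$ combined with separatedness of $g$ forces properness of $f$, then yields that $\h(\fa,G)$ is proper.

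To upgrade properness to projectivity, observe that $\md{\fa}{G}\to S$ is quasi-projective and $\ma{\fa}{G}\to S$ is affine, hence separated; consequently $\h(\fa,G)$ is itself quasi-projective (e.g.\ via the graph $\md{\fa}{G}\hookrightarrow\md{\fa}{G}\times_S\ma{\fa}{G}$). A proper quasi-projective morphism to a Noetherian target is projective, which concludes the argument.

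The main delicate point in the above outline is the properness of $\rho_*$: while the underlying parameter-space embedding is closed, extension of structure group can identify non-isomorphic $G$-Higgs bundles as $GL_n$-Higgs bundles, so the induced map on GIT quotients is not in general a closed immersion. Properness must therefore be extracted from the specific structure of Simpson's construction in \ci[\S 9]{si2}; this is the single step that requires care, since every other ingredient (Simpson's properness for $GL_n$, finiteness in Proposition \ref{gcrt}, quasi-projectivity of the moduli space) is either imported or transparent.
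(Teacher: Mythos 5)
Your proposal is correct and takes essentially the same route as the paper: embed $G$ into $GL_n$, form the commutative square with $\iota_M=\rho_*$ and $\iota_A=\alpha$, obtain properness of the composite $\h(\fa,GL_n)\circ\iota_M$ from Simpson's Theorem 6.11 together with his Corollary 9.15 (or 9.19) of \ci{si2} --- which is exactly the citation that settles the point you flag as delicate --- and then cancel to get properness of $\h(\fa,G)$, upgrading to projectivity via quasi-projectivity of the moduli space over $S$. The only (harmless) deviation is that you invoke the finiteness of $\alpha$ from Proposition \ref{gcrt}, whereas mere separatedness of $\alpha$ (automatic for morphisms of varieties) suffices for the cancellation; the paper accordingly records that finiteness only as an independent complement in Remark \ref{itsfin}.
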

\begin{proof}
Since the Dolbeault moduli space structural morphism (\ref{dmsp}) is quasi-projective,
it is enough to prove that the Hitchin $\SP$-morphism (\ref{eqhm}) is proper.

 In  the case  $\G=GL_n$, properness of the Hitchin morphism follows from
\ci[Theorem 6.11]{si2}. In the case when $\G$ is reductive algebraic, we argue as follows. We first embed $\G$ into some $GL_n$ as a closed subgroup. We thus obtain 
the commutative diagram of $\SP$-morphisms:
\beq\la{cdhm}
\xymatrix{
\md{\fa}{\G}    \ar[r]^-{\iota_M} \ar[d]^-{\h(\fa, \G)} & \md{\fa}{GL_n}  \ar[d]^-{\h(\fa, GL_n)}   \\
\ma{\fa}{\G}    \ar[r]^-{\iota_A}  & \ma{\fa}{GL_n}. 
}
\eeq
The morphism $\h(\fa, GL_n) \circ  \iota_M$ is proper (cf. \ci[Theorem 6.11 ($\h$ is proper) and Corollary 9.15 ($\iota_M$ is proper), or the more precise  Corollary 9.19 ($\iota_M$ is finite)]{si2}). It follows that
the morphism $\h(\fa, \G)$ in (\ref{eqhm})   is proper, as predicated. 
\end{proof}

\begin{rmk}\la{ggtt}
The case of $\G=GL_n$ and families  $\fa$ of arbitrary relative dimension  is due to C. Simpson \ci{si2}. Proposition \ref{hitprop}  is a simple complement to Simpson's proof of properness. The case of $G$-semisimple for families of curves is due to G. Faltings \ci[Theorem I.3]{fa}.
The paper \ci{ahh} contains a proof of properness for  $G$ reductive algebraic for families of curves in arbitrary characteristic.
\end{rmk}

\begin{rmk}\la{itsfin} {\rm ({\bf Complement: $\iota_A$ is finite})}
We observe that, as one may expect, the morphism  $\iota_A$ is finite.  Since we could not locate a reference in the literature for this seemingly well-known fact, we offer a proof in the slightly more general  Proposition \ref{gcrt}.
\end{rmk}

\subsection{$\Gm$-equivariance of the Hitchin morphism}\la{dms22}$\;$

The group $\Gm$ acts on the Hitchin $\SP$-morphism (\ref{eqhm}) as follows. It acts trivially on $\SP$.
it acts on the Dolbeault moduli space   by scalar multiplication of   Higgs fields (these are suitable sections of a twisted adjoint bundle; cf \ci[p.49]{si2}), and  this action covers the
trivial action over $\SP$; see {\cm \ci[p.17-18 and p.62]{si2}}. It acts on the Hitchin base with positive weights $d_j$: let $\m{Q}=\oplus_j \m{Q}_j$ be the coherent sheaf on $\SP$
whose symmetric $\m{O}_\SP$-algebra has spectrum representing the Hitchin base (cf. \S\ref{dms11}); 
we view it as a graded $\m{O}_S$-algebra by setting $\deg \m{Q}_j:=d_j$; then $t\in \Gm$ acts by multiplication by $t^{d_i}$ on each 
$\m{Q}_i$. This action also covers the trivial action
over $\SP$. The Hitchin morphism  is $\Gm$-equivariant for the aforementioned actions.
Diagram (\ref{cdhm}), is a diagram of $\Gm$-equivariant $\SP$-morphisms.

\begin{rmk}\la{wps}
{\rm {\bf (Weighted projective completion of the Hitchin base)}}
For cones and their projective completions, see \ci[Appendix B5]{fu}.
The Hitchin base $\ma{\fa}{\G}$ is the  affine cone ${\rm Spec}_{{\m{O}_\SP}} (\m{S}:={\rm Sym}^\bullet (\m{Q}))$ over $\SP$. By taking {\rm Proj} of the associated graded algebra $\m{S}[z]$, with $z$ a free variable of weight $1$, and  with remaining weigths $d_j$ as   specified above, 
we obtain the relative projective cone completion $\overline{\ma{\fa}{\G}}$ over $\SP$ of the Hitchin base.
Its fibers over $\SP$ are wighted projective spaces, with dimensions varying upper-semicontinuously. 
It carries a natural $\Gm$-action, compatible with the one on the affine cone. The divisor at infinity is Cartier and made of $\Gm$-fixed points.
\end{rmk}

\begin{rmk}\la{lim}
{\rm {\bf (Existence of zero-limits for the $\Gm$-action)}}
An important consequence of the properness of the Hitchin morphism  is that, since the $\Gm$-action on the target of the Hitchin morphism is contracting,
the zero-limits for the $\Gm$-action exist  in the Dolbeault moduli spaces $\md{\X_s}{\G}$; these limits  are fixed points and they  
dwell in the fiber of the Hitchin morphism over the origin (the unique fixed point) of the target. See \ci[Corollary 9.20]{si2}.
This important consequence allows us to use Simpson's compactification technique, amplified in Proposition
\ref{sicompv}, in the proof of our compactification Theorem \ref{sicomp}.
\end{rmk}

\begin{rmk}\la{e2d}
{\rm {\bf (Cohomological consequences for contracting $\Gm$-actions)}}
A well-known  consequence of the properness of the Hitchin morphism  coupled with  the fact that
the   $\Gm$-action on the target of the Hitchin morphism is contracting is that, given a projective manifold $X$, we have natural isomorphisms:
 \[H^*(\md{X}{\G}, \rat)=H^*({\h(X, \G)^{-1}(o),\rat)}\] between the rational cohomology groups of the Dolbeault moduli space and the one of the fiber of the Hitchin morphism over the origin (nilpotent cone);
 the same holds for $I\!H^*(\md{X}{\G}, \rat)=H^*({\h(X, \G)^{-1}(o),(\m{IC}_{\md{X}{\G}, \rat})_{|
 {\h(X, \G)^{-1}(o)}}})$.  See \ci[Lemma 6.11 and Remark. 6.12]{dehali}. In fact, the corresponding Leray spectral sequences are $E_2$-degenerate, and their $E_2$-pages consist of only one non-zero column, i.e. $E_2^{0q}$.
\end{rmk}

\subsection{Complement: minor variation on the Chevalley restriction theorem}\la{vic}$\;$

The standard formulation of Chevalley's restriction theorem that we have been able to locate in the literature  \ci{hum, ch-gi} is as follows:
let $G$ be a complex connected semisimple algebraic group, let $\mathbbm g$ be the Lie algebra of $G$, let $\mathbbm h$
be a Cartan subalgebra of $\mathbbm g$, let $W$ be the associated Weyl group; then the natural  Chevalley restriction morphism
$\mathbbm g /\!/G \to \mathbbm h/W$ is an isomorphism. 
It is well-known that the same conclusion holds if we replace $G$ semisimple and connected
with $G$ reductive algebraic. I thank V. Ginzburg and G. Williamson for suggesting us how to prove it.
I thank an anonymous referee for pointing us to the fact that what follows can also  be deduced from \ci[Theorem 4.2]{luri}.

\begin{lm}\la{crm}
Let $G$ be connected and reductive algebraic. Then the Chevalley restriction morphism $\mathbbm g /\!/G \to \mathbbm h/W$ is an isomorphism.
\end{lm}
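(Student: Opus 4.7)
The plan is to reduce the connected reductive case to the classical semisimple case by exploiting the almost-direct-product decomposition $G = Z \cdot G'$, where $Z := Z(G)^0$ is the connected central torus and $G' := [G,G]$ is semisimple (and connected, since $G$ is). At the level of Lie algebras this gives an honest direct sum decomposition $\mathbbm g = \mathfrak z \oplus \mathbbm g'$, with $\mathfrak z := \mathrm{Lie}(Z)$ and $\mathbbm g' = [\mathbbm g, \mathbbm g] = \mathrm{Lie}(G')$. Correspondingly, a Cartan subalgebra of $\mathbbm g$ has the shape $\mathbbm h = \mathfrak z \oplus \mathbbm h'$ with $\mathbbm h' \subset \mathbbm g'$ a Cartan of the semisimple part, and the roots of $(\mathbbm g, \mathbbm h)$ coincide with those of $(\mathbbm g', \mathbbm h')$, so that the Weyl group $W$ of $\mathbbm g$ equals the Weyl group $W'$ of $\mathbbm g'$ and acts trivially on $\mathfrak z$.

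Next, I would compare invariants on both sides of the Chevalley restriction morphism. On the target one has $\mathbbm h / W = \mathfrak z \times (\mathbbm h'/W')$ by the preceding remark. On the source, since $Z$ is central, the adjoint action of $G$ on $\mathbbm g$ fixes $\mathfrak z$ pointwise and preserves $\mathbbm g'$; moreover, any $G'$-invariant polynomial on $\mathbbm g'$ is automatically $Z$-invariant and hence $G$-invariant, because $G = Z \cdot G'$. Thus
\[
\comp[\mathbbm g]^G \;=\; \comp[\mathfrak z] \otimes \comp[\mathbbm g']^{G'},
\]
so that $\mathbbm g /\!/ G = \mathfrak z \times (\mathbbm g' /\!/ G')$.

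Finally, I would invoke the classical Chevalley restriction theorem for the connected semisimple group $G'$, which yields the isomorphism $\mathbbm g'/\!/G' \xrightarrow{\sim} \mathbbm h'/W'$. Combining this with the two product decompositions above, and noting that under the identifications the Chevalley restriction morphism for $(G, \mathbbm g, \mathbbm h)$ is the product of the identity on $\mathfrak z$ with the Chevalley morphism for $(G', \mathbbm g', \mathbbm h')$, we conclude that $\mathbbm g /\!/ G \to \mathbbm h/W$ is an isomorphism. The only mildly delicate point is the identification $\comp[\mathbbm g']^{G'} = \comp[\mathbbm g']^G$, but this is immediate from $G = Z \cdot G'$ and the triviality of the $Z$-action, so no real obstacle arises: once the decomposition is set up, the statement follows formally from the semisimple case.
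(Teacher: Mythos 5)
Your argument is correct and follows essentially the same route as the paper's proof: reduce to the connected semisimple case via the decomposition $\mathbbm g = \mathfrak z \oplus \mathbbm g_{ss}$, show $\comp[\mathbbm g]^G = \comp[\mathfrak z]\otimes\comp[\mathbbm g_{ss}]^{G_{ss}}$ and $\comp[\mathbbm h]^W = \comp[\mathfrak z]\otimes\comp[\mathbbm h']^{W'}$, and invoke the classical Chevalley restriction theorem for the semisimple part. The only (immaterial) difference is that you realize the semisimple part as the derived subgroup $[G,G]\subseteq G$, whereas the paper works with the quotient $G/Z^o$ by the connected center and transports the splitting of $\mathbbm g$ along the differential of the quotient map.
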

\begin{proof}
Let $Z^o\subseteq G$ be the identity component of the center of $G$: it coincides with the radical of $G$ and it is   a torus \ci[Thm. 5.1]{milneAGS}. The quotient group $q: G\to G':=G/Z^o$ is semisimple \ci[Pr. 2.5]{milneAGS}. 

Let $\mathbbm g$ denote
the Lie algebra of $G$. Similarly, we have $\mathbbm g'$ for $G'$, and $\mathbbm z$ for $Z^o$. 

Let $T'\subseteq G'$ be a maximal torus and let $\mathbbm t'$ be its Lie algebra. Then we have the root-space decomposition
of $T'$-modules $\mathbbm g' = \mathbbm t' \oplus \mathbbm r'$.

The Lie algebra  $\mathbbm g$ is a $G$-module for the (adjoint) action of $G$
and, since $Z^o$ acts trivially on $\mathbbm g$, the $G$-action factors through $G'$. 

Since $G$ is reductive algebraic, we have a non-canonical splitting  $\mathbbm g= \mathbbm z \oplus \mathbbm g_1$ of $G$-modules, as well as of $G'$-modules.

The differential $dq: \mathbbm g = \mathbbm z \oplus \mathbbm g_1 \to \mathbbm g'$ is $G'$-equivariant and split, and it identifies 
$\mathbbm g_1$ and $\mathbbm g'$ as $G'$-modules.

Let $T\subseteq G$ be the  pre-image of $T'$. It is  a maximal torus; this can be seen, for example, as a consequence of  the Hofmann-Schereer Splitting Theorem, which, in particular, says that the commutator subgroup intersects trivially a suitable torus, therefore it intersects trivially the center; this implies $T$ is commutative and, since $Z^o$ is connected, it is also connected; one then shows it is a torus, and a maximal one, non-canonically isomorphic to $Z^o\times T'$. 

We note that the Weyl group $W:=W(G,T)=N(T)/T$ maps isomorphically, via $q$, onto $W':= W(G',T')=N(T')/T'$.

We thus have the natural commutative diagram of $\comp$-algebras:
\beq\la{rffrrf}
\xymatrix{
\comp [\mathbbm g']   \ar[r] \ar[d]  & \comp [\mathbbm t'] \ar[d] \\
\comp [\mathbbm g]   \ar[r] & \comp [\mathbbm t],
}
\eeq
where: the horizontal maps are given by restrictions of functions;  the vertical ones are pull-backs of functions and, the l.h.s. one is  a morphisms of  $G'$-modules, the r.h.s. one is a morphisms of $W=W'$-modules.

 By taking invariants in
(\ref{rffrrf}), we have  the natural commutative diagram of $\comp$-algebras:
\beq\la{rff}
\xymatrix{
\comp [\mathbbm g']^{G'}   \ar[r]^-\cong \ar[d]^-{1 \otimes -}  & \comp [\mathbbm t']^{W'} \ar[d]^-{1 \otimes -} \\
\comp [\mathbbm z] \otimes_\comp \comp [\mathbbm g_1]^{G'}=  \comp [\mathbbm g]^{G'} =\comp [\mathbbm g]^{G}   \ar[r]^-{1\otimes -}_-\cong & 
\comp [\mathbbm z] \otimes_\comp \comp [\mathbbm t'_1]^{W'}
=
\comp [\mathbbm t]^{W'}=  \comp[\mathbbm t]^{W},
}
\eeq 
where:  the top horizontal arrow is the Chevalley Restriction Isomorphism for the semisimple $G'$;
the vertical arrows map a function $f$ to $1\otimes f$; the identifications on the bottom horizontal arrows
follow from the splitting of modules constructed above; the bottom horizontal arrow is the tensor product of
the identity on $\comp [\mathbbm z]$ with the Chevalley restriction morphism for $G'$, via the identifications given above.

The desired conclusion  follows. 
\end{proof}

The following was suggested to us by J. Heinloth.  Since we do do not know of a reference, we offer a proof. We thank V. Ginzburg for suggesting the one below.
We thank T. Haines, J. Heinloth,  and J.E. Humphreys for helpful discussions.

\begin{pr}\la{gcrt}
Let $G \to M$ be a finite morphism of complex reductive algebraic Lie groups. Then the natural morphism induced by the adjoint actions:
\beq\la{efin}
\xymatrix{
\mathbbm g /\!/G  \ar[r] & \mathbbm m /\!/M}
\eeq
is finite.
\end{pr}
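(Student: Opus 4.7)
The plan is to establish finiteness of (\ref{efin}) via a graded Nakayama argument, once the question has been reduced to showing that the set-theoretic fiber of (\ref{efin}) over the vertex of $\mathbbm m/\!/M$ consists only of the vertex of $\mathbbm g/\!/G$.

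First, since $\phi : G \to M$ is finite, $\ke \phi$ is a finite group scheme, so its Lie algebra (equal to $\ke(d\phi)$) vanishes; thus $d\phi: \mathbbm g \hookrightarrow \mathbbm m$ is an injective Lie algebra homomorphism, which is tautologically $G$-equivariant once $G$ acts on $\mathbbm m$ through $\phi$. Passing to invariant functions yields a $\comp$-algebra homomorphism
\[
\comp[\mathbbm m]^M \;\lorw\; \comp[\mathbbm g]^G,
\]
and this map is graded with respect to the weights coming from the commuting scaling $\Gm$-actions on $\mathbbm g$ and $\mathbbm m$. Both sides are finitely generated, positively graded $\comp$-algebras with degree-zero piece $\comp$ (by reductivity of $G$ and $M$). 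By the graded form of Nakayama's lemma, the finiteness of $\comp[\mathbbm g]^G$ as a $\comp[\mathbbm m]^M$-module is equivalent to the quotient $\comp[\mathbbm g]^G / (\comp[\mathbbm m]^M_+)\comp[\mathbbm g]^G$ being finite dimensional over $\comp$; since this quotient is a finitely generated graded $\comp$-algebra, the condition reduces to the set-theoretic fiber of (\ref{efin}) over $0 \in \mathbbm m /\!/ M$ equaling $\{0 \in \mathbbm g /\!/ G\}$.

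To establish this, I would appeal to the standard description of closed points of $\mathbbm g /\!/ G$ for reductive $G$: every such point is the class of an element $X \in \mathbbm g$ lying on the unique closed $G$-orbit inside the pre-image in $\mathbbm g$, and for a reductive $G$ acting on its Lie algebra such orbits are precisely the orbits of semisimple elements. The image of $[X]$ in $\mathbbm m /\!/ M$ equals $0$ if and only if $d\phi(X)$ lies in the nilpotent cone of $\mathbbm m$. Because $\phi$ is a morphism of algebraic groups, $d\phi$ respects the abstract Jordan decomposition, so $d\phi(X)$ is semisimple. An element of $\mathbbm m$ that is both semisimple and nilpotent must vanish, hence $d\phi(X) = 0$; the injectivity of $d\phi$ then forces $X = 0$, completing the reduction.

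The main technical input, beyond the formal graded-Nakayama step, is the preservation of the Jordan decomposition under the differential of a homomorphism of linear algebraic groups (this is classical, e.g. Humphreys, \emph{Linear Algebraic Groups}); this is the one place where the algebraic-group structure — rather than merely the linear reductive structure of the ambient Lie algebras — is genuinely used. The remaining inputs (injectivity of $d\phi$, identification of closed orbits with semisimple orbits for reductive $G$, and the graded Nakayama lemma) are formal.
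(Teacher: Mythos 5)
Your proof is correct, but it takes a genuinely different route from the one in the paper. The paper first reduces to the case of connected groups (via the finite surjections $\mathbbm g/\!/G^o \to \mathbbm g/\!/G$ and $\mathbbm m/\!/M^o \to \mathbbm m/\!/M$), then invokes the Chevalley restriction isomorphism of its Lemma \ref{crm} to rewrite the morphism as $\mathbbm h(\mathbbm g)/W(\mathbbm h(\mathbbm g)) \to \mathbbm h(\mathbbm m)/W(\mathbbm h(\mathbbm m))$ for a compatible pair of Cartan subalgebras, and concludes by observing that a closed embedding of Cartans followed by a quotient by a finite Weyl group is finite. You instead stay on the GIT side: the graded Nakayama reduction to the set-theoretic fiber over the vertex, combined with the identification of closed adjoint orbits with semisimple orbits and the fact that $d\phi$ preserves Jordan decomposition, so that a semisimple element mapping into the nilpotent cone of $\mathbbm m$ must vanish. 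Your argument has the advantage of bypassing both the reduction to connected groups and the Chevalley restriction theorem entirely (the paper needs Lemma \ref{crm} anyway for other reasons, so for it this is no extra cost), at the price of importing the Jordan-decomposition compatibility for differentials of homomorphisms of algebraic groups; the paper's argument is more explicit, exhibiting the morphism concretely as a map of Weyl-group quotients of Cartan subalgebras. One small point to make explicit in your write-up: the characterization of closed adjoint orbits as semisimple orbits is usually stated for connected (semi)simple groups, so you should either note that it persists for possibly disconnected reductive $G$ (a $G$-orbit is a finite union of $G^o$-orbits, all closed or all non-closed), or perform the same harmless reduction to $G^o$, $M^o$ that the paper does.
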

\begin{proof}
Let $G^o, M^o$ denote the respective connected components of the identity. There is the natural commutative diagram of morphism:
\beq\la{ff1}
\xymatrix{
\mathbbm g /\!/G^o \ar[r] \ar[d] & \mathbbm m /\!/ M^o \ar[d]   \\
\mathbbm g /\!/G \ar[r]  & \mathbbm m /\!/ M,
}
\eeq
where the vertical arrows are finite and surjective. It follows that if  the top horizontal arrow is finite, then so is the bottom one
(properness follows from    surjectivity, and  quasi-finiteness is evident), so that
we may assume that the reductive algebraic groups $G$ and $M$ are connected.

Since the morphism $G\to M$ is assumed to be finite, the differential $\mathbbm g \to \mathbbm m$ is injective
and we may view $\mathbbm g$ as dwelling inside  $\mathbbm m$.

By using the maximality of Cartan subalgebras,  we can choose Cartan subalgebras $\mathbbm h (\mathbbm g)  \subseteq 
\mathbbm h (\mathbbm m).$  Let $W(\mathbbm h (\mathbbm g))$ and $W(\mathbbm h (\mathbbm m))$ be the corresponding Weyl groups.

The morphism $\mathbbm g /\!/ G = \mathbbm h (\mathbbm g) / W(\mathbbm h (\mathbbm g) \to
\mathbbm m /\!/ M = \mathbbm h (\mathbbm m) / W(\mathbbm h (\mathbbm m)$  (cf. Lemma \ref{crm}) is finite because the Weyl groups are finite
and $h (\mathbbm g) \subseteq  h (\mathbbm m).$ 
\end{proof}

\begin{rmk}\la{voo}
Even if the given morphism 
of reductive algebraic groups is a closed embedding, the morphism (\ref{efin}) may fail to be  a closed embedding.
Consider  the classical embedding $SO(4) \subseteq GL_4$ (more generally, $SO(2n) \subseteq GL_{2n})$:
then the algebra of invariants is a polynomial algebra with generators  $s_2, p_2$, where $p_2$ is the Pfaffian and satisfies
  $p_2^2= \det$,  the determinant; it follows that, in this case,  (\ref{efin}) is $2:1$ onto its image.
\end{rmk}

\subsection{Vanishing of vanishing cycles}\la{dms33}$\;$

In general, due to the possible singularity of the base  $\SP$ of the family $\fa$, the intersection complex of the Dolbeault moduli space over  $\SP$ does not restrict to the intersection complexes of the Dolbeault moduli spaces of the fibers over $s \in \SP$. A priori, even if $\SP$ is nonsingular, it is not  immediately clear that there should be a complex on the Dolbeault moduli space $M/\SP$
that restricts to the intersection complexes of the Dolbeault moduli spaces of the fibers over $s \in \SP$. 

 The following proposition  is   an application of the gluing Lemma \ci[Thm. 3.2.4]{bbd}, and  it ensures that
there is a natural complex  on the Dolbeault moduli space   $M/\SP$ which restricts to the intersection complexes of the Dolbeault moduli spaces of the fibers over $s \in \SP$.  The vanishing $\phi F=0$ 
is an amplification of  \ci[Lm. 4.1.9 and Corollary 4.1.4]{dema}.

\begin{pr}\la{rg90}
{\rm ({\bf The  complex  $F$ and the vanishing  $\phi F=0$})}
Let $p: M \to \SP$ be a morphism of varieties  that is topologically locally trivial over the base $\SP$.
Then there is a complex $F \in D(M)$ that, locally over $\SP$,
is a box product of the intersection complex of a typical fiber with the constant sheaf ${\rat_{\SP}}$.
In particular, $F$ restricts to the intersection complexes of the fibers $M_s$ of $M/\SP$. 
If $S$ is a non singular curve and $s\in S$ is a point, then the vanishing cycle complex $\phi F=0$. 
\end{pr}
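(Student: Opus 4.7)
The plan is to build $F$ by gluing local models via the BBD gluing lemma \ci[Thm.~3.2.4]{bbd}, taking advantage of topological local triviality to write the models as external products, and then to extract $\phi F=0$ from the local product structure around $s$. First I would fix an open cover $\{U_\alpha\}$ of $\SP$ together with the fiber-preserving homeomorphisms $\phi_\alpha : p^{-1}(U_\alpha) \xrightarrow{\sim} F_\alpha \times U_\alpha$ provided by hypothesis, where $F_\alpha$ denotes a typical fiber of $p$ over $U_\alpha$. I would then set
\[
K_\alpha := \phi_\alpha^{*}\bigl(\m{IC}_{F_\alpha} \boxtimes \rat_{U_\alpha}\bigr) \in D\bigl(p^{-1}(U_\alpha)\bigr),
\]
with an overall shift chosen (after fixing compatible Whitney stratifications) so that $K_\alpha$ lies in the heart of the middle-perversity $t$-structure.

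Next I would construct the canonical transition isomorphisms. On $U_{\alpha\beta}$ the composite $h_{\beta\alpha} := \phi_\beta \circ \phi_\alpha^{-1}$ is a fiber-preserving homeomorphism between $F_\alpha \times U_{\alpha\beta}$ and $F_\beta \times U_{\alpha\beta}$; since the intersection complex is canonically preserved by stratified homeomorphisms (being characterized by intrinsic support and cosupport axioms), one obtains canonical isomorphisms $\theta_{\beta\alpha} : K_\alpha|_{p^{-1}(U_{\alpha\beta})} \xrightarrow{\sim} K_\beta|_{p^{-1}(U_{\alpha\beta})}$, and functoriality of pullback gives the cocycle identity on triple overlaps. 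Since each $K_\alpha$ is perverse we have $\mathrm{Hom}^{-1}(K_\alpha, K_\alpha) = 0$, and \ci[Thm.~3.2.4]{bbd} produces a complex $F \in D(M)$, unique up to unique isomorphism, whose restriction to each $p^{-1}(U_\alpha)$ is $K_\alpha$; by construction $F|_{M_s} \cong \m{IC}_{M_s}$ up to the global shift.

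To verify $\phi F=0$ when $\SP$ is a nonsingular curve and $s \in \SP$, I would work locally: choose $U_\alpha$ to be a small analytic disk $\disk$ around $s$, so that via $\phi_\alpha$ we have $p^{-1}(\disk) \cong M_s \times \disk$ with $p$ identified with the second projection. Let $t$ be the coordinate on $\disk$. On this neighborhood $F$ is, up to shift, the external product $\m{IC}_{M_s} \boxtimes \rat_{\disk}$. Because the specialization morphism $i_s^{*}\rat_{\disk} \to \psi_t \rat_{\disk}$ is an isomorphism (both being $\rat_{\{s\}}$), the same property passes through the external product with $\m{IC}_{M_s}$, so that $i_s^{*}F \xrightarrow{\sim} \psi_t F$ and the triangle defining $\phi_t$ forces $\phi_t F = 0$.

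The hard part is the cocycle verification: one must be sure that the IC identifications induced by the various trivialization mismatches close on the nose, without sign or scalar ambiguity. This reduces to the simplicity of $\m{IC}$ as a perverse sheaf (its endomorphism algebra is $\rat$) combined with the strict naturality of pullback under stratified homeomorphisms; mild care is needed because the fibers $F_\alpha$ for different $\alpha$ must be related through paths in $\SP$ along which the trivializations change, but passing to a refinement of the cover by contractible opens should make the rigidity transparent.
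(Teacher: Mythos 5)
Your proposal is correct and follows essentially the same route as the paper: a contractible cover of $\SP$, local models pulled back from $\m{IC}\boxtimes\rat$, negative-Ext vanishing via contractibility of the base factor, canonical transition isomorphisms from the intrinsic characterization of the intersection complex, the BBD gluing lemma, and $\phi F=0$ from the local product structure over a disk. The only point worth sharpening is the scalar ambiguity you flag at the end: simplicity of $\m{IC}$ alone pins the transition maps only up to a nonzero scalar, and the paper removes this by characterizing $I$ as restricting to the shifted \emph{constant} sheaf on dense open subsets of the regular parts of the components, so that the identity section normalizes each $\rho_{ba}$ and the cocycle identity holds on the nose.
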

\begin{proof} 
We may assume that $\SP$ is connected. Let $\m{M}$ be an algebraic variety which is a representative of the homeomorphism
class of the fibers of $M/\SP$. Note that $\m{M}$ is not necessarily irreducible, nor connected; the intersection complex
of such varieties is defined to be the direct sum of the intersection complexes  of its irreducible components as in \ci{dejag}, or in \ci{dema}, where it is also proved that it is a homeomorphism invariant by reducing to the  pure-dimensional case, proved by M. Goresky and R. MacPherson. This is proved directly by Ben Wu in 
\ci{wu}.
Let $\{\SP_a\}$ be an open covering of $\SP$ such that the $\SP_a$ are contractible
and such that $M/\SP$ is trivialized over the open sets $\SP_a$ by means of $\SP_a$-homeomorphisms
 $\phi_a:M_a:=p^{-1} (\SP_a) \stackrel{\sim}\to S_a \times  \m{M}$. Let $q_a: \SP_a \times \m{M} \to \m{M}$ be the projection.
 
 We set $\SP_{ab}= \SP_{a}\cap \SP_{b}=\SP_{b} \cap \SP_a= S_{ba}$. Similarly for triple intersections. We also  have
 $M_{ab}$, $q_{ab}$, etc. We denote the restrictions of the $\phi$'s as follows:
 $\phi_{a|b}:= {\phi_a}_{|M_{ab}}$. We also have the transition $\SP_{ab}$-homeomorphisms 
 $\phi_{ba}:= \phi_{b|a} \circ \phi_{a|b}^{-1}$ and their restrictions, denoted $\phi_{ba|c}$ to  triple intersections. The cocycle identities then read as follows: $\phi_{cb|a} \circ \phi_{ba|c} = \phi_{ca|b}$.

Let $I:= IC_{\m{M}}$ be the intersection complex of $\m{M}$. By \ci[Lm. 4.1.3 and its proof]{dema}, $I$ is perverse semisimple
and it is characterized by the following conditions being met: being perverse semisimple; having its simple summands supported precisely on the  irreducible components of $\m{M}$; being
the direct sum $\oplus_{T^o} \rat_{T^o} [\dim T]$ of the constant sheaves on any Zariski dense open subset $T^o$  the  regular part $T^{\rm reg}$ of each irreducible component $T$ of $\m{M}$, shifted by the dimension of such component.

Let $F_a:= \phi_a^* q_a^*  I \in D(M_a)$. 
We have the chain of canonical identifications: (first ${\rm Hom}$ in $D(M_a)$, the others in $D(\m{M})$):
\beq\la{gl1}
{\rm Hom} (F_a, F_a [i])= {\rm Hom} (I, {q_a}_* {\phi_a}_* \phi_a^* q^*_a I [i]) = {\rm Hom} (I, {q_a}_*  q^*_a I [i])
={\rm Hom} (I,   I [i]),
\eeq
where: the first equality holds by the usual adjunction between pull-back and push-forward; the second equality  holds because
$\phi_a$ is a homeomorphism; the third equality holds is by the Vietoris-Begle Theorem \ci[Pr. 2.7.8]{kash}, in view of the contractibility 
of $\SP_a$. Since $I$ is perverse, we have that the last term in (\ref{gl1}) vanishes $\forall i<0$, and we get that:
\beq\la{gl1b}
{\rm Hom} (F_a, F_a [i])=0, \quad \forall i<0.
\eeq

We denote restrictions as follows $F_{a|b}:= {F_a}_{|M_{ab}}$.
By adjunction again, we have:
\beq\la{gl2}
{\rm Hom} (F_{a|b}, F_{b|a}=  {\rm Hom} (I,  {q_{ab}}_* {\phi_{a|b}}_* \phi_{b|a}^* q_{ab}^* I).
\eeq
By the characterization of $I$, the second argument in the last term is canonically isomorphic to $I$. Let 
$\rho_{ba} \in {\rm Hom} (F_{a|b}, F_{b|a})$ be the element corresponding to this identification via
(\ref{gl2}). It follows that the $\rho$'s  satisfy the cocycle condition.

In view of the glueing lemma \ci[Thm. 3.2.4]{bbd}, we have an object $F$, unique up to unique 
isomorphism, that glues the $F_a$.

The vanishing  $\phi F =0$ follows directly from the local triviality  of $F$ over $S$.
\end{proof}

\begin{rmk}
{\rm ({\bf Twisting by local systems})}
Once we have constructed $F$ as in the proof of Proposition \ref{rg90}, we can twist it by the pull-back
of any rank one local system on $\SP$ and obtain other constructible complexes that restrict to the intersection complexes
of the fibers. These correspond to modified choices of the gluing data given by the  $\rho$'s in the aforementioned proof.
\end{rmk}

\begin{rmk}\la{atvr}
The evident variant of Proposition \ref{rg90} in the context of the twisted Dolbeault spaces of Remark \ref{sm} holds, with the same proof.
\end{rmk}


\section{Projective compactification of Dolbeault moduli spaces}\la{doe45}

We freely use the  the Set-up \ref{setup} and the notation and results in \S\ref{rcdm}.

\subsection{The projective compactification statement}\la{codms}$\;$

Denote the Hitchin $\SP$-morphism (\ref{eqhm}) for the  smooth projective family $\fa$ and the reductive algebraic group $G$ simply by: 
\beq\la{eqhms}
\xymatrix{
\hh: \hm  \ar[r] & \ha.
}
\eeq

The structural $\SP$-morphism for $M/\SP$ is usually not proper: just consider the $\Gm$-action which rescales the Higgs field so that its image under the Hitchin morphism escapes to infinity. It is desirable to produce a compactification of $M$ relative to this morphism that retains many of the properties of $M$, especially in connection with the Hitchin morphism.
 We provide such a compactification in Theorem \ref{sicomp}. In some special cases, this compactification has some precursors; see
 Remark \ref{other}.

When dealing with Cartesian diagrams, we denote parallel arrows with the same symbol. This abuse of notation does not create conflicts in what follows. 

\begin{tm}\la{sicomp} {\rm ({\bf Relative projective compactification of Dolbeault moduli spaces})} 
Let $\fa$ be a smooth projective family, let $G$ be a reductive algebraic group and consider the Hitchin $\SP$-morphism $h$ (\ref{eqhms}).
There is a  Cartesian diagram of $S$-varieties:
\beq\la{sicomp1}
\xymatrix{
Z  \ar[d]^-{\hh} \ar[r]^-a &    \ov{\hm}   \ar[d]^-{\hh}  &    \hm  \ar[d]^-{\hh} \ar[l]_-b 
\\
W   \ar[r]^-a &  \ov{\ha}  &      \ha \ar[l]_-b
}
\eeq
such that:

\ben
\item
The $\SP$-structural morphisms for the varieties in the left-hand and middle columns are projective (in general, $M$ and $A$ are not proper over $\SP$).

\item
The morphisms $\hh$ of Hitchin-type are projective.

\item All morphisms in {\rm (\ref{sicomp1})}, including the omitted structural morphisms to $\SP,$  are $\Gm$-equivariant morphisms of $\Gm$-varieties.
The $\Gm$-actions on $Z$ and $W$ are trivial.

\item
The morphism $a$ and $b$ are complementary closed and open embeddings, respectively.
The varieties $W$ and $Z$   support  effective Cartier divisors in $\ov{\hm}/\SP$ and in $\ov{\ha}/\SP$, respectively.

\item
The fibers of $\ov{\ha}/\SP$ are non-canonically $\Gm$-equivariantly isomorphic to weighted projective spaces (cf. Remark
\ref{ig}).

\item
Assume that  $v_{\hm}: \hm \to \SP$ and $v_{\ha}: \ha \to \SP$  are \underline{smooth} (see Remark \ref{sm}).  Then:

\ben
\item
$\ov{\hm}, \ov{\ha}, Z$ and $W$ 
 are orbifold fibrations over $\SP$
(:= the fibers are orbifolds); 

if, in addition, $\SP$ is nonsingular,  then $\ov{\hm},  \ov{\ha}, Z$ and  $W$ are orbifolds.

\item
There is an augmented  commutative diagram:
\beq\la{sicomp2}
\xymatrix{
\w{Z} \ar[d]^-r      & \w{\ov{\hm}}  \ar[d]^-r  & 
\\
Z  \ar[d]^-{\hh} \ar[r]^-a &    \ov{\hm}   \ar[d]^-{\hh}  &    \hm  \ar[d]^-{\hh} \ar[l]_-b 
\\
W   \ar[r]^-a &  \ov{\ha}  &      \ha , \ar[l]_-b
}
\eeq
where the morphisms $r$ are  ``resolution of singularities over $\SP$" of $Z$ and $(\ov{\hm}, \hm)$ respectively, in the sense 
that $v_{\w{Z}}: \w{Z} \to \SP$ and $v_{\w{\ov{\hm}}}: \w{\ov{\hm}} \to \SP$ are smooth and projective, $r: 
\w{\ov{\hm}}\to \ov{\hm}$ is an isomorphism over $\hm$, 
and, for every $s \in \SP$, the morphisms $r_s: \w{Z}_s \to Z_s$ and $r_s: \w{\ov{\hm}}_s \to \ov{\hm}_s $ are birational (hence resolution of singularities)\footnote{in general it is not possible to resolve, say, $Z$ and at the same time resolve all the fibers of $Z/\SP$};  the boundary $\w{\ov{\hm}} \setminus \hm$ is a simple normal crossing divisor in $\w{\ov{\hm}}$ 
over $\SP.$ 
\een

\een
\end{tm}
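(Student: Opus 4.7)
The right-hand column of (\ref{sicomp1}) is already in place from Remark \ref{wps}: take $\ov{\ha} := \mathrm{Proj}_{\m{O}_\SP}(\m{S}[z])$ with $z$ of weight one. This makes $\ov{\ha}/\SP$ projective with fibers non-canonically isomorphic to weighted projective spaces, which yields item (5), and it exhibits $W := \ov{\ha}\setminus \ha$ as the $\Gm$-fixed effective Cartier divisor at infinity. The remaining task is to produce $\ov{\hm}$ together with a projective $\Gm$-equivariant morphism $\ov{\hh}: \ov{\hm}\to \ov{\ha}$ extending $\hh$, in such a way that the resulting square is Cartesian and the complement $Z$ of $\hm$ equals $\ov{\hh}^{-1}(W)$.

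To this end, the plan is to apply Simpson's compactification Theorem \ref{siomp} in the $\Gm$-equivariant amplified form of Proposition \ref{sicompv} to the morphism $\hh$. The hypotheses are in place by earlier results: $\hh$ is projective by Proposition \ref{hitprop}; the $\Gm$-equivariance of $\hh$ is described in \S\ref{dms22}; and the zero-limits of the $\Gm$-action on $\hm$ exist by Remark \ref{lim}, since $\Gm$ acts contractingly on $\ha$ and $\hh$ is proper. The amplified construction then outputs an $S$-variety $\ov{\hm}$ projective over $\SP$, a projective $\Gm$-equivariant extension $\ov{\hh}$ of $\hh$ making the left square Cartesian, and an effective Cartier divisor $Z = \ov{\hm}\setminus \hm$ of $\Gm$-fixed points with $Z = \ov{\hh}^{-1}(W)$. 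Items (1)--(4) read off directly.

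For part (6) under the smoothness hypotheses on $v_{\hm}$ and $v_{\ha}$, the key input is Lemma \ref{orb}: away from the nilpotent cone $\hh^{-1}(o)$ the $\Gm$-stabilizers on $\hm$ are finite, so the hypotheses of Lemma \ref{orb} hold for $\ov{\hm}/\SP$, yielding the orbifold fibration statement; the analogous statement for $\ov{\ha}/\SP$ is automatic because it is a fibration in weighted projective spaces, and for $Z,W$ it follows by restriction to the fixed boundary divisor. When $\SP$ is nonsingular, an orbifold fibration over it is an orbifold. To produce (\ref{sicomp2}), I would apply Hironaka's $\SP$-relative resolution of singularities to $\ov{\hm}$; since $\hm$ is already smooth over $\SP$, the modification can be arranged to be an isomorphism over $\hm$, and a subsequent step makes the boundary a relative simple normal crossings divisor, giving $\w{\ov{\hm}}$; restricting over $W$ yields $\w{Z}\to Z$.

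The main obstacle is the amplification recorded as Proposition \ref{sicompv}: Simpson's original construction provides a proper compactification, and upgrading it to a \emph{projective} compactification that fits into a genuinely Cartesian square over $\ov{\ha}$ is the nontrivial strengthening which lets one incorporate the Hitchin morphism into the compactification framework. A secondary delicate point is the orbifold statement over a possibly singular base $\SP$, which is precisely what Lemma \ref{orb} is designed to handle.
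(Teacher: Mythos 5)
Your overall architecture (the weighted projective completion of the Hitchin base on the right, Proposition \ref{sicompv} for the middle column, Lemma \ref{orb} and relative resolution for part (6)) matches the paper's, but there is a genuine gap at the central step: Proposition \ref{sicompv} does not ``output a compactification of $M$'' when applied to $h\colon M\to A$. What it produces is the geometric quotient $U/\Gm\to U'/\Gm$ of the open loci where $\infty$-limits fail to exist; applied directly to $V=M$, $V'=A$, this yields a quotient of an open subset of $M$, of strictly smaller dimension, not a variety containing $M$ as a dense open subvariety. The missing idea --- the heart of Simpson's technique and of the paper's \S\ref{achyt} --- is to apply the proposition to $V:=M\times\mathbb A^1\to V':=A\times\mathbb A^1$ with the action $t\cdot(m,x)=(t\cdot m,tx)$. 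Then the properness of $h$ shows that the $\infty$-limits exist exactly on $M_o\times\{0\}$ (the nilpotent cone times the origin), so that $U=(M\times\mathbb A^1)\setminus(M_o\times\{0\})$; one sets $\ov{M}:=U/\Gm$, recovers $M$ as $(M\times\Gm)/\Gm$, and obtains $Z$ as the quotient of the fiber of $U\to\mathbb A^1$ over $0$. The same device applied to $A\times\mathbb A^1$ is what identifies $\ov{A}$ with the $\mathrm{Proj}$ of Remark \ref{wps} and makes the squares Cartesian. Without the $\times\,\mathbb A^1$ step your construction does not get off the ground.

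A second omission: Proposition \ref{sicompv} requires relatively ample line bundles on $V$ and $V'$ admitting $\Gm$-linearizations, which is precisely the hypothesis the paper flags as missing from Simpson's original statement and which can fail for non-normal varieties. Verifying it for $M\times\mathbb A^1$ is the content of Corollaries \ref{linearization} and \ref{linearization2}, which go back into the GIT construction of $M$ as a good quotient of Simpson's parameter space to linearize $L_M$; this input is also what drives the projectivity assertions in parts (1) and (2), via Proposition \ref{sicompv}.(3) and Kempf descent, so it cannot be treated as automatic. Finally, in (6a), Lemma \ref{orb} must be applied to the $\Gm$-variety $U$ (or its fibers over $S$), whose stabilizers are finite because $U$ avoids the fixed locus, with geometric quotient $\ov{M}$ --- not ``to $\ov{M}/\SP$'' itself, which is the quotient rather than the acted-upon variety.
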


\begin{rmk}\la{other}{\rm ({\bf Relation to earlier work.)} 

The paper \ci{si} provides a compactification of the de Rham moduli space for  a smooth projective family $X/S$ and a reductive algebraic $G$;
to my knowledge, the projectivity of this compactification is unknown. The proof of Theorem \ref{sicomp} is an adaptation of   Simpson's construction from the de Rham case to the Dolbeault case.

The paper \ci[\S4]{markman} provides a modular  compactification in the set-up or Remark \ref{sm}.

The paper \ci{ha} provides a projective compactification of the Hitchin morphism in the case of a curve of genus at least two and $G=SL_2$, via the method
of symplectic cuts.
In the same set-up, the paper \ci{edgr} provides a birational projective model for the Dolbeault moduli space in the case $G=GL_n.$ 

 \ci[Theorem 1.3.1 and (1.2.3)]{havi} provides a compactification of the Dolbeault moduli space over $S={\rm Spec} (\comp)$ in the 
  twisted case (cf. Remark \ref{sm}) of a curve of genus at least two with  $G=GL_n, SL_n, PGL_n$. In connection with this construction, the reader can consult \ci{edgr}.

The paper \ci{schmitt} provides a projective and modular compactification of  the moduli of Hitchin pairs on a projective manifold. Roughly speaking, Schmitt's Hitchin pairs on a projective manifold share the same definition
as Higgs pairs for $\G=GL_n$, except that they are not subject to the integrability condition
$\varphi \wedge \varphi =0$ on the Higgs field $\varphi$, which is automatically satisfied on curves, but is an actual condition in higher dimensions.
Note that this implies that, unlike the case of curves and $GL_n$, in higher dimension the Hitchin morphism is often not surjective.
The reader is referred to the preprint \ci{chng} for a study of the Hitchin morphism in higher dimensions.
On a curve, and for $G=GL_n$, the two are closely related: Schmitt's compactification   compactifies Simpson's Dolbeault moduli space. In higher dimensions, Schmitt's compactification contains a compactification of Simpson's as a Zariski closed subvariety.
That Schmitt's compactification should coincide with the one provided by this paper (and by  \ci{havi}'s)  should coincide for curves and $G=GL_n$, was suggested to us by Leticia Brambila Paz.
In \S\ref{asco}, following a suggestion  of A. Schmitt, we identify, the two compactifications, Schmitt's \ci[Theorem 7.1]{schmitt} and the one of Theorem \ref{sicomp},
when $X$ is a curve of genus at least two, and we consider Higgs bundles for the group $GL_n$, with degree coprime to the rank.
Even in this case, the compactification given in this paper, while not modular in nature, has interesting features, e.g. Theorem \ref{sicomp}.(6), that do not seem readily affordable via the methods in \ci{schmitt}.
}
\end{rmk}

\begin{rmk}\la{H moduli space} {\rm {\bf ($\Lambda$-modules of polynomial type)} }
The main result of this paper, namely Theorem \ref{sicomp}, parts 1-5,  is stated and proved in the case of Dolbeault moduli spaces for $\G$ reductive
algebraic.   These results hold,
with essentially the same proofs,
for moduli spaces of Higgs sheaves with fixed Hilbert polynomial and, more generally, for moduli spaces of $\Lambda$-modules
with fixed Hilbert polynomial for $\Lambda$  a sheaf of rings of differential operators of polynomial type (cf. \ci[\S1,2,3]{si2} for definitions and main results; 
see also \ci[Theorem 6.11]{si2},  on the properness of the Hitchin morphism, which also holds in this context, with the same proof). In fact, the construction of the compactification in \S\ref{achyt} 
relies on the properness of the Hitchin morphism, on the  rather general results on quotients by $\Gm$-actions in \S\ref{rfvtyu}, and on  the $\Gm$-linearization results in \S\ref{gm action gm linearization}, especially Corollary \ref{linearization2}; all of these results have evident valid counterparts
in case of $\Lambda$ of polynomial type.
\end{rmk}

\subsection{A compactification technique due to C. Simpson}\la{rfvtyu} $\;$

In this section we recall and slightly amplify Simpson's construction of suitable  compactifications given in \ci[\S11]{si}.

Let $S$ be  a variety endowed with the trivial $\Gm$-action. Let    $V$ and $V'$ be  varieties over $S,$
endowed with a $\Gm$-action covering the trivial $\Gm$-action over $S$, so that the structural morphisms $V,V'\to S$ are $\Gm$-equivariant.
Let  $V\to V'$ be a   $\Gm$-equivariant  proper $S$-morphism.

We thank Carlos Simpson for discussions relating to the following issue. We warmly thank the anonymous referee that brought this issue to the surface. \ci[\S11, Theorems 11.1 and 11.2]{si} are missing the seemingly necessary  hypothesis that there exists a $\Gm$-linearization for a  relatively ample line bundle
on $V$ (and for us in this paper, also on $V'$). In general, a $\Gm$-linearization for a given line bundle may fail to exist when the underlying variety  is not normal; see \ci[Introduction and Example 2.15]{br}. 


We thus make the following additional assumption with respect to \ci[\S11]{si}. We assume that $V$ and $V'$  carry  relatively  ample line bundles with respect to the structural morphism to $\SP$, and that these line
bundles  admit  $\Gm$-linearizations; we do not require any kind of compatibility  between the line bundles, nor between their $\Gm$-linearizations.

\begin{tm}\la{siomp} {\rm (\ci[Thm. 11.2]{si})}
Assume that $V/S$ carries a relatively ample  line bundle admitting a $\Gm$-linearization.
Assume the fixed point set $V^{\Gm} \subseteq V$ is proper over $S,$ and that   that $0$-limits exist in $V$.
Let $U\subseteq V$ be the subset
such that the $\infty$-limits do not exist (this subset may be empty; e.g. $V/S$ proper). Then $U$ is open in $V$  and there is a universal geometric quotient $U/\Gm$. This quotient is separated and proper over $S.$
\end{tm}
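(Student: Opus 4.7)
My plan is to prove the theorem by a relative GIT argument using the $\Gm$-linearized, relatively ample line bundle $L$ on $V/S$, adapting the construction of \ci[\S11]{si} with the additional linearization hypothesis.

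First, to show $U$ is open, I would analyze its closed complement $V^- := V \setminus U$, the locus where $\infty$-limits exist. Using the $\Gm$-linearization of $L$, I would embed $V$ equivariantly into a relative projective bundle over $S$; the Bialynicki-Birula decomposition of the ambient bundle restricts to one of $V$, and $V^-$ becomes a union of repelling cells over the components of $V^{\Gm}$. Since $0$-limits exist throughout $V$ and $V^{\Gm}$ is proper over $S$, the repelling cells in $V$ are closed by a relative Bialynicki-Birula / Sumihiro-type argument, so $V^-$ is closed and $U$ is open.

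Next, I would construct $U/\Gm$ by GIT. I would twist the $\Gm$-linearization on $L$ by a character $\chi_m$ of $\Gm$ with $m$ chosen so that the weight of the twisted $L$ on every component of $V^{\Gm}$ (locally finite on $S$) is strictly negative. By the Hilbert--Mumford criterion for $\Gm$, a point $v\in V$ is semistable iff the twisted weight at its $0$-limit is $\leq 0$ and, if the $\infty$-limit exists, the weight there is $\geq 0$; the twist forces the first to hold automatically and the second to fail whenever the $\infty$-limit exists. Thus $V^{\mathrm{ss}} = U$. A positive-dimensional $\Gm$-stabilizer would force $v\in V^{\Gm}$, which is disjoint from $U$, so semistability equals stability on $U$, and GIT yields a universal geometric quotient $U/\Gm$, quasi-projective over $S$.

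Separatedness comes for free from the GIT construction, and for properness of $U/\Gm \to S$, I would use the natural $S$-morphism $\pi: U/\Gm \to V^{\Gm}$ sending an orbit $\Gm\cdot v$ to $\lim_{t\to 0} tv$. The fibers of $\pi$ over $v_0\in V^{\Gm}$ are $\Gm$-orbits in the attracting cell at $v_0$ that have no $\infty$-limit, which, locally via an equivariant affine tubular neighborhood, form a weighted-projective-type bundle; hence $\pi$ is proper. Composing with the proper $V^{\Gm}\to S$ gives properness of $U/\Gm \to S$.

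The main obstacle is the identification $V^{\mathrm{ss}} = U$ in the second step: since $V$ is only quasi-projective over $S$, one must either pass to a $\Gm$-equivariant projective completion (for which the $\Gm$-linearization of $L$ is essential, as flagged in the excerpt and in \ci[Example 2.15]{br}) or verify that the graded-ring construction $\mathrm{Proj}_S\bigl(\bigoplus_n (p_*L^{\otimes n})\bigr)$ faithfully reflects semistability. The Hilbert--Mumford computation itself is routine, but rigorously setting up the relative GIT over a possibly singular base $S$, and ensuring the quotient is \emph{universal} rather than merely geometric, is the main technical hurdle.
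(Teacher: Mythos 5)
First, a point of orientation: the paper does not prove this statement at all --- it is quoted verbatim (with the added $\Gm$-linearization hypothesis) from \ci[Thm.\ 11.2]{si}, and the only related argument the paper supplies is the proof of part (3) of Proposition \ref{sicompv}, which proceeds by Kempf descent of a power of the linearized bundle along the free quotient and fiberwise ampleness, not by a Hilbert--Mumford computation. So your proposal is necessarily a different route, and as written it has genuine gaps at each of its three main steps. (i) \emph{Openness of $U$.} Your justification is that ``the repelling cells in $V$ are closed,'' but Bialynicki--Birula cells are only locally closed in general (already for $\p^1$ with the standard action the repelling cell of $\infty$ is open and dense), so the closedness of $V^-$ does not follow from the decomposition alone; one must genuinely use the existence of $0$-limits and the properness of $V^{\Gm}$, e.g.\ via a semicontinuity argument on weights in an equivariant projective embedding, as Simpson does. (ii) \emph{$V^{\rm ss}=U$.} You correctly flag this as the main obstacle, but it is not a technicality to be deferred: the numerical criterion in the form you invoke (``semistable iff the weights at the $0$- and $\infty$-limits have the right signs'') is a statement about \emph{complete} $\Gm$-varieties, and for a point of $U$ the $\infty$-limit does not exist in $V$, so the criterion is not even formulated there. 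Passing to an equivariant completion $\ov{V}$ changes the fixed locus and does not identify $\ov{V}^{\rm ss}\cap V$ with $V^{\rm ss}$. Your weight-twisting heuristic does verify the model case of an affine cone with positive weights (Simpson's Thm.\ 11.1), but the general quasi-projective case is exactly where the work lies.

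(iii) \emph{Properness.} Two inferences here are invalid. The assignment $[\Gm\cdot v]\mapsto\lim_{t\to 0}tv$ need not be a morphism: even when all $0$-limits exist, the limit map $V\to V^{\Gm}$ is a morphism only on each attracting cell and can fail to be continuous globally (the attractor $V^+\to V$ can be bijective without being an isomorphism). And even granting a morphism $\pi\colon U/\Gm\to V^{\Gm}$, properness of the fibers of $\pi$ does not imply properness of $\pi$ (an open immersion has proper fibers). The properness in Simpson's theorem is established by a valuative-criterion/semistable-reduction argument that uses the existence of $0$-limits to extend a $\Gm$-orbit over a punctured disc across the puncture; some such argument is unavoidable and is missing from your sketch. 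In short, the GIT framing is a reasonable strategy and your use of the linearization hypothesis is exactly where the paper says it is needed, but steps (i)--(iii) each require a substantive argument that the proposal does not supply.
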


Part (1,2) of 
the following proposition can be proved along the same lines of the proof of  Simpson's Theorem \ref{siomp}.
 We simply   note the following: the assumption i) on surjectivity  implies easily the assumption ii) on the fixed point set and $0$-limits.
One applies Simpson's technique to $V$ and to $V'$ to find the universal geometric
quotients.  The descended morphism between the universal geometric quotient arises  from  the equivariance of the morphism $U\to U'.$
The properness and separateness over $S$  are proved by Simpson. The properness of the descended morphism follows from
the properness of $(U/\Gm)/S$ (all our morphisms of schemes are separated and of finite type).  What needs proof is part (3).

\begin{pr}\la{sicompv}
Assume that $V/S$ and $V'/S$ carry  relatively  ample line bundles admitting $\Gm$-linearizations.
Assume the fixed point set $V^{\Gm} \subseteq V$ is proper over $S.$ Assume that $0$-limits exist in $V.$ Assume that either: i) the $\Gm$-equivariant proper $S$-morphism $V\to V'$ is surjective, or ii) the fixed point set 
${V'}^{\Gm} \subseteq V'$ is proper over $S$ 
and the $0$-limits exist in $V'.$ 
Let $U\subseteq V$ ($U' \subseteq V'$, resp.)  be the subset
such that the $\infty$-limits do not exist.  Then: 

\ben
\item
$U$  ($U'$, resp,) is open in $V$ ($V'$, resp.);  

\item
the  pre-image of $U'$ is $U;$
the proper morphism $U\to U'$ descends to a proper $S$-morphism  $U/\Gm \to U'/\Gm$  between the  geometric quotients, both of which are proper and separated over $S;$

\item
if the morphism $V\to V'$ is projective, then so is the descended morphism 
$U/\Gm  \to U'/\Gm$; if, in addition, $(U'/\Gm)/S$ is also  projective, then $(U/\Gm)/S$ is projective.

\een
\end{pr}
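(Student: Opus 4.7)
The plan is to produce a $\Gm$-linearized relatively ample line bundle for the morphism $V\to V'$, descend it to the quotient $U/\Gm\to U'/\Gm$, and then handle the final assertion by composition of projective morphisms.

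First, I would observe that the hypothesized $\Gm$-linearized line bundle $L_V$, which is relatively ample for $V/S$, is \emph{automatically} relatively ample for the proper, $\Gm$-equivariant morphism $f:V\to V'$. Indeed, each fiber of $f$ over $v'\in V'$ is a closed subscheme of the fiber of $V/S$ over the image of $v'$ in $S$, hence the restriction of $L_V$ to it is ample as the restriction of an ample line bundle to a closed subscheme; by the fiberwise criterion for relative ampleness of proper morphisms, $L_V$ is relatively ample for $f$. Thus $L_V$ already furnishes a $\Gm$-linearized relatively ample line bundle for the projective morphism $V\to V'$, without any further twisting.

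Second, I would descend $L_V$ to a relatively ample line bundle on $U/\Gm$. By the very definition of $U$ as the locus where $\infty$-limits fail to exist, no $\Gm$-fixed point lies in $U$; consequently the $\Gm$-action on $U$ has finite (cyclic) stabilizers at every point, and because $U$ is quasi-projective over $S$ the orders of these stabilizers are bounded over quasi-compact subsets of $S$. After replacing $L_V$ by a sufficiently divisible power $L_V^{\otimes N}$, Kempf's descent lemma provides a line bundle $\overline{L}$ on the geometric quotient $U/\Gm$ with $\pi^{*}\overline{L}\cong L_V^{\otimes N}|_U$, where $\pi\colon U\to U/\Gm$. The graded sheaf of $\Gm$-invariant sections $\bigoplus_{n\ge 0}\bigl(f_{*}(L_V^{\otimes nN}|_U)\bigr)^{\Gm}$ descends from $U'$ to $U'/\Gm$ and realizes $U/\Gm$ as a closed subscheme of the associated relative $\mathrm{Proj}$ over $U'/\Gm$; thus $\overline{L}$ is relatively ample for the descended morphism $U/\Gm\to U'/\Gm$. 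Combined with the properness already established in parts (1,2), this gives that $U/\Gm\to U'/\Gm$ is projective. The final assertion is then immediate: if $(U'/\Gm)/S$ is projective, then $(U/\Gm)/S$ is the composition of two projective morphisms, hence projective.

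The main obstacle I anticipate is verifying the descent of \emph{relative ampleness} (and not merely relative semi-ampleness) in the second step. Producing $\overline{L}$ via Kempf's lemma is routine once the stabilizer characters have been trivialized by a uniformly chosen power; the less automatic point is exhibiting $\overline{L}$ as a truly relatively ample sheaf on $U/\Gm$ over $U'/\Gm$. I would handle this by the invariant-section construction sketched above, working locally on $U'/\Gm$ where the action of the reductive group $\Gm$ reduces the statement to the standard GIT-descent formalism for relatively ample line bundles. A minor ancillary point is the uniform boundedness of stabilizer orders, which follows from the quasi-projectivity of $U$ over $S$ together with the upper semicontinuity of stabilizer dimension for the $\Gm$-action.
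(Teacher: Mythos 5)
Your overall architecture agrees with the paper's: reduce to part (3), note that the $(U/S)$-ample $\Gm$-linearized bundle is automatically $(U/U')$-ample, use the absence of fixed points in $U$ to get finite cyclic stabilizers, pass to a power $L_U^{\otimes\tau}$ on which the stabilizers act trivially, apply Kempf's descent lemma to get $L$ on $U/\Gm$, and conclude by composition of projective morphisms for the last assertion. All of that matches.

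The gap is in the one step you yourself flag as the main obstacle: showing that the descended bundle $L$ is $\bigl((U/\Gm)/(U'/\Gm)\bigr)$-ample. Your invariant-$\mathrm{Proj}$ construction does not close this. First, the composite $U\to U'\to U'/\Gm$ is not proper (the second map is a geometric quotient by $\Gm$ with affine orbits), so coherence and finite generation of the graded algebra $\bigoplus_n \bigl(f_*(L_U^{\otimes nN})\bigr)^{\Gm}$ are not automatic; second, and more seriously, the assertion that this algebra ``realizes $U/\Gm$ as a closed subscheme of the associated relative $\mathrm{Proj}$'' is essentially a restatement of the relative ampleness you are trying to prove, and the appeal to ``standard GIT-descent formalism'' does not apply as stated, since that formalism concerns good quotients of schemes affine or projective over the base with a linearized ample bundle, not a geometric $\Gm$-quotient of a quasi-projective $U$ proper only over $U'$. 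The paper's argument avoids all of this: since the descended morphism $U/\Gm\to U'/\Gm$ is already known to be proper, relative ampleness of $L$ may be checked on closed fibers (EGA III$_1$, 4.7.1); for a point $[u']\in U'/\Gm$ with representative $u'$, the fiber $U_{u'}$ of $U\to U'$ maps finitely and surjectively onto the fiber $F_{[u']}$ of the descended morphism (it is the quotient by the finite stabilizer of $u'$), the pullback of $L|_{F_{[u']}}$ along this map is the restriction of the $(U/U')$-ample $L_U^{\otimes\tau}$, hence ample, and ampleness descends along finite surjective morphisms (Stacks Project, Tag 0B5V). You should replace your $\mathrm{Proj}$ step with this fiberwise argument, or else supply the missing finite-generation and separation statements for your graded algebra.
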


\begin{proof}
As it was pointed out before the statement, we only need to prove part (3). The last part of (3) follows since the composition of projective morphisms is projective (all our schemes are quasi separated and all our morphisms are quasi compact; here and in what follows we make such remarks and the reader can consult  either  EGA, or \ci{stacks-project}, to see that these are the needed conditions for the validity of our assertions).
Let $L_U$ be an$(U/\SP)$-ample line bundle admitting a $\Gm$-linearization. The $(U/\SP)$-ample  $L_U$ is automatically $(U/U')$-ample
(all our morphisms are quasi compact).
Since by definition $U$ does not contain any $\Gm$-fixed point, all the stabilizers of the $\Gm$-action on $U$ are finite, hence cyclic. 
There is an integer $\tau>0$ such that the $\Gm$-linearization on $L^{\otimes \tau}_U$ induced by the one on $L_U$
has trivial stabilizers (all our morphisms are of finite presentation and all our schemes are Noetherian).
By Kempf's Descent Lemma \ci[Th\'eor\`eme 2.3]{dr-na} (the proof given there for good GIT quotients remains valid 
in the present context of a  geometric quotient), there is a line bundle $L$ on $U/\Gm$
such that it pulls-back to $L^{\otimes \tau}_U$ as a $\Gm$-bundle.

We claim that $L$ is $((U/\Gm)/(U'/\Gm))$-ample. If this were the case, then we would be done, because then the descended morphism,
being proper and quasi projective, would be projective (all our schemes are quasi compact and quasi separated).
In view of the properness of the descended morphism, 
in order to argue the desired relative ampleness, we observe that it is equivalent to the ampleness of $L$ when restricted to all closed fibers
of the descended morphism (cf. EGA III1, 4.7.1). On the other hand, given any such fiber $F_{[u']}$ over  a point $[u'] \in U'/\Gm$ with representative $u'\in U'$, the fiber $U_{u'}$
of $U\to U'$ over  $u'$  maps finitely and surjectively
onto $F_{[u']}$, this map being the quotient by the finite $\Gm$-stabilizer of $u'$. The restriction of the $(U/U')$-ample $L_U$ to 
$U_{u'}$ is automatically ample. It follows that the pull-back of $L_{|F_{[u']}}$ via the finite surjective morphism $U_{u'} \to F_{[u']}$
is ample, so that $L_{|F_{[u']}}$ is ample as well (see \cite[\href{https://stacks.maTheoremcolumbia.edu/tag/0B5V}{Tag 0B5V}]{stacks-project}).
As seen above, this implies  our claim that $L$ is $((U/\Gm)/(U'/\Gm))$-ample.
\end{proof}

\subsection{$\Gm$-action on $M/S$ and $\Gm$-linearizability of $L_M$}\la{gm action gm linearization}$\;$

Let $X/S$ be smooth and projective with structural morphism $f:X\to S$, let $\m{O}_X (1)$ be an $(X/S)$-very ample line bundle on $X$, let $P$
be a Hilbert polynomial, let $H$ be a vector bundle of finite rank on $X$, let $\Lambda:= {\rm Sym}^\bullet (H^\vee)$ be the symmetric $\m{O}_X$-algebra associated with $H^\vee$. In terms of triples $(H,\delta, \gamma)$ as in \ci[p.82]{si1}, here we set $\delta=0$ and $\gamma$ as in 
\ci[p.85]{si1}. We thus view $\Lambda$ as a sheaf of rings of differential operators of polynomial type on $X/S$.
In what follows, given a scheme $S'/S$, a superscript $-'$ denotes an object after the base change $S'/S$, e.g. $X'/S'$,  $\m{O}_{X'}(1)$, etc.

Simpson constructs the coarse moduli space $M/S$ that universally corepresents the functor $M^\#$ that assigns to $S'/S$ the set of isomorphism classes
of $p$-semistable $\Lambda$-modules on $X'$ with Hilbert polynomial $P$ wrt to $\m{O}_{X'}(1)$
(more precisely,  on the closed fibers $X'_s$ wrt to $\m{O}_{X'_s}(1)$). This moduli space comes equipped with an $(M/S)$-ample line bundle. The case of Higgs bundles is the  special case $H:= \Omega^1_{X/S}$. The Dolbeault moduli space is the even more special case discussed
in \ci[p.16]{si2}.
Next,  let us summarize the construction of $M/S$. For details we refer the reader to \ci[\S1,2,3]{si1}.
There are integers $N,m>0$ such that what follows holds.

The polynomial type $\Lambda={\rm Sym}^\bullet_{\m{O}_X}(H^\vee)$ is $\zed^{\geq 0}$-graded. The group $\Gm$ acts on $\Lambda$ by setting the action of $t\in \Gm$
to be multiplication by $t^i$ in degree $i$.
The part $\Lambda_1=\m{O}_X \oplus H^\vee$ inherits the grading in degrees zero and one, and a $\Gm$-action. 
Let $V=\comp^{P(N)}$. Simpson constructs the moduli space $M/S$ as a good quotient (terminology defined in \ci[p.61]{si1}) $Q/\!/SL(V)$ of a quasi projective variety $Q/S$ on which
the special linear group  $SL(V)$ acts, and which is equipped with a $(Q/S)$-very ample line bundle $L_Q$.
A suitable power $L_Q^{\otimes \nu}$ of $L_Q$ descends via the good quotient by $SL(V)$ to an $(M/S)$-ample line bundle on $M$.
If $S'/S$ is a variety over $S$, then an $S'$-point of $Q$ is an equivalence class  $[q]$ of quotients $q: V\otimes_\comp \Lambda_1(-N) 
\to \m{E}$ on $X'$, where two quotients are identified if they have the same kernel. The fiber of $L_Q$ over this $S'$-point is
the line bundle $\det f'_*\m{E}(m)$.
Let $\underline{u}=(u_0,u_1)$ be a graded automorphism of $V\otimes_\comp \Lambda_1(-N)$. It defines a new equivalence class
of quotients $[q(\underline{u})]$, by taking $q(\underline{u}): V\otimes_\comp \Lambda_1(-N) 
\to \m{E}(\underline{u})$, where $q(u)$ is the quotient of $V\otimes_\comp \Lambda_1(-N) $ by $\underline{u} (\ke{\,q})$.
Then $\underline{u}$ yields an isomorphism $\tilde{\underline{u}}: \det f'_* \m{E}(m) \stackrel{\simeq}\to \det f'_* \m{E} (\underline{u})(m)$.
Let $g \in SL(V)$. Define $\underline{g}=(g_0,g_1)$, by setting $g_0 = g \otimes 1_{\m{O}(-N)}$ and 
$g_1= g \otimes 1_{H^\vee(-N)}$. Then $g\cdot  [q]:= [q (\underline{u})]$ defines an $SL(V)$-action on $Q$. The construction  $\tilde{\underline{g}}$
defines an $SL(V)$-linearization of $L_Q$, i.e. an $SL(V)$-action on the total space  $T(L_Q)$  of the line bundle $L_Q$ that lifts the
$SL(V)$-action on $Q$. Given $t \in G_m$, we set $\underline{t}=(t_0,t_1)$, with $t_0 = 1_V \otimes 1_{\m{O}(-N)}$ and $t_1:
1_V \otimes t1_{H^\vee(-N)}$. By repeating what above, we obtain a $\Gm$-action on $Q$ and a lift of it to $L_Q$.
Clearly,, since $\underline{g} \circ \underline{t}=\underline{t} \circ \underline{g}$, the two actions commute with each other, and we get an action
of $SL(V)\times \Gm$ on $Q$ and an $SL(V)\times \Gm$-linearization of $L_Q$.
If we repeat the good quotient construction $Q/\!/SL(V)$ by replacing $Q$ with $T(L_Q^{\otimes \nu})$ and $L_Q$ with its pullback to 
$T(L_Q^{\otimes \nu})$, we see that the total space $T(L_M)$ is the corresponding good quotient by $SL(V)$ of 
$T(L_Q^{\otimes \nu})$. Via the universal property of good quotients (they corepresent the appropriate quotient functor),
since the $\Gm$-actions commutes with the $SL(V)$-action, the $\Gm$-action on $T(L_Q^{\otimes \nu})$ descends to a $\Gm$-action on $T(L_M)$,
so that $L_M$ finds itself $\Gm$-linearized. 

We have thus proved the following

\begin{pr} {\rm {\bf{($\Gm$-linearization of $L_M$ on $M$)}} }
The $SL(V)$ and $\Gm$-linearized  $(Q/S)$-very ample line bundle $L_Q$ on $Q$ descends to an $(M/S)$-ample  line bundle
$L_M$ on $M$ which inherits a $\Gm$-linearization from the one of $L_Q$.
\end{pr}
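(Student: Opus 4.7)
The plan is to transport the $\Gm$-linearization on $L_Q$ across the good quotient $Q/\!/SL(V)$ by exploiting the fact that the $SL(V)$- and $\Gm$-actions commute, both at the level of $Q$ and of the total space of $L_Q^{\otimes \nu}$. The key observation is that both actions arise from graded automorphisms of $V\otimes_\comp \Lambda_1(-N)$, and these graded automorphisms visibly commute.

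First I would make commutativity precise. The $SL(V)$-action on $Q$ is induced by the graded automorphisms $\underline{g}=(g\otimes 1_{\m{O}(-N)},\, g\otimes 1_{H^\vee(-N)})$ for $g\in SL(V)$, while the $\Gm$-action on $Q$ is induced by $\underline{t}=(1_V\otimes 1_{\m{O}(-N)},\, 1_V\otimes t\cdot 1_{H^\vee(-N)})$ for $t\in\Gm$. Since $\underline{g}\circ\underline{t}=\underline{t}\circ\underline{g}$ as graded automorphisms, and since the equivalence class $[q]$ depends only on the kernel of $q$, the induced $SL(V)$- and $\Gm$-actions on $Q$ commute, assembling to an $SL(V)\times\Gm$-action. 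Next, I would lift both actions to $L_Q$ via the canonical isomorphism $\tilde{\underline{u}}:\det f'_*\m{E}(m)\xrightarrow{\simeq}\det f'_*\m{E}(\underline{u})(m)$ associated to any graded automorphism $\underline{u}$; applying this to $\underline{g}$ and to $\underline{t}$ produces commuting $SL(V)$- and $\Gm$-linearizations on $L_Q$, and hence on $L_Q^{\otimes\nu}$.

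With commuting linearizations in hand, I would then descend. By construction, $L_M$ is obtained from $L_Q^{\otimes\nu}$ via the good $SL(V)$-quotient $Q/\!/SL(V)$; equivalently, by running the good quotient construction fiber-wise on the total spaces, the total space $T(L_M)$ is itself the good $SL(V)$-quotient of $T(L_Q^{\otimes\nu})$. Good quotients universally corepresent the quotient functor, so any morphism $T(L_Q^{\otimes\nu})\to T(L_Q^{\otimes\nu})$ that is $SL(V)$-equivariant and commutes with the structural data descends uniquely to the quotient $T(L_M)$. Applying this to each $t\in\Gm$ and using the fact that $\Gm$ and $SL(V)$ commute on $T(L_Q^{\otimes\nu})$, I would produce a $\Gm$-action on $T(L_M)$ lifting the descended $\Gm$-action on $M$; this is precisely a $\Gm$-linearization of $L_M$.

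The main delicate point I would expect is the descent of the $\Gm$-action at the level of the total space: this requires that the $SL(V)\times\Gm$-action on $T(L_Q^{\otimes\nu})$ respect the structural morphism to $Q$ in a way compatible with the good quotient, so that the universal corepresentability can be applied to both $Q/\!/SL(V)=M$ and $T(L_Q^{\otimes\nu})/\!/SL(V)=T(L_M)$ simultaneously. This compatibility however is built into the construction, since the line bundle structure is preserved by the graded automorphisms used to define both actions. Once this is granted, the conclusion is immediate.
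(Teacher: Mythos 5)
Your proposal is correct and follows essentially the same route as the paper: commuting $SL(V)$- and $\Gm$-actions on $Q$ arising from commuting graded automorphisms of $V\otimes_\comp\Lambda_1(-N)$, lifted to $L_Q$ via the canonical isomorphisms $\tilde{\underline{u}}$, followed by descent of the $\Gm$-action on $T(L_Q^{\otimes\nu})$ to $T(L_M)$ using that $T(L_M)$ is itself the good $SL(V)$-quotient of $T(L_Q^{\otimes\nu})$ and that good quotients corepresent the quotient functor. No gaps to report.
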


Recalling that:
\ben
\item
moduli of $p$-semistable Higgs bundles on $X/S$ with given Hilbert polynomial with respect to a given $\m{O}_X(1)$
are the special case $H=\Omega^1_{X/S}$, and that Dolbeault moduli spaces are unions of connected components of
 Higgs moduli spaces with a special polynomial (\ci[p.16, bottom]{si2});
 \item diagram (\ref{cdhm}) is $\Gm$-equivariant (cf. \S\ref{dms22}), so that we can pull back the $\Gm$-linearized  
$(M/S)$-ample  line bundle $L_{M(GL_n)}$ via $\iota_M$, and thus obtain a $\Gm$-linearized  $(M(\G)/S)$-ample 
line bundle $L_{M(\G)}$;
 \een
 we immediately obtain the following two corollaries.

\begin{cor}\la{linearization} {\rm {\bf ($\Gm$-linearized $M/S$-ample line bundle)}}
 The Dolbeault moduli space  $M/S$ for a reductive algebraic $\G$, endowed with the classical $\Gm$-action (scalar multiplication of Higgs field) admits an $(M/S)$-ample line bundle $L_M$ endowed with  a $\Gm$-linearization.  The same is true for  moduli spaces of $p$-semistable
 Higgs sheaves with a fixed Hilbert polynomial. The same is true, in the 
  case of $\Lambda$ of polynomial type,
 for moduli spaces of $p$-semistable  $\Lambda$-modules with a fixed Hilbert polynomial.
 \end{cor}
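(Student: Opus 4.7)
The plan is to deduce all three parts of the corollary from the Proposition just proved by a short chain of specializations, restrictions, and finite pullbacks, tracking the $\Gm$-linearization and relative ampleness at each step.

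For $\Lambda$-modules with $\Lambda$ of polynomial type, the statement is literally that of the Proposition, so nothing further is needed. For $p$-semistable Higgs sheaves with fixed Hilbert polynomial, I would specialize to $H=\Omega^1_{X/S}$, noting that $\Lambda={\rm Sym}^\bullet_{\m{O}_X}((\Omega^1_{X/S})^\vee)$ is of polynomial type and that the corresponding $\Lambda$-modules are Higgs sheaves; the $\Gm$-linearized relatively ample line bundle furnished by the Proposition then applies verbatim.

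Next, for the Dolbeault moduli space with $\G=GL_n$, recall from \ci[p.16]{si2} that it is a union of connected components of the Higgs moduli space singled out by a specific Hilbert polynomial (vanishing rational Chern classes). Restricting the line bundle of the previous step to these clopen components preserves both relative ampleness and the $\Gm$-linearization, yielding the desired $L_{M(GL_n)}$.

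Finally, for a general reductive $\G$, I would use diagram (\ref{cdhm}): the morphism $\iota_M:\md{\fa}{\G}\to\md{\fa}{GL_n}$ is $\Gm$-equivariant (\S\ref{dms22}) and finite (\ci[Corollary 9.19]{si2}). Setting $L_{M(\G)}:=\iota_M^*L_{M(GL_n)}$, the $\Gm$-equivariance transports the linearization, while finiteness of $\iota_M$ implies that the pullback of a relatively ample line bundle remains relatively ample (cf.\ \ci[\href{https://stacks.math.columbia.edu/tag/0B5V}{Tag 0B5V}]{stacks-project}), so $L_{M(\G)}$ is $(\md{\fa}{\G}/S)$-ample. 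No substantive obstacle arises: the word ``immediately'' preceding the corollary in the source is accurate, since each step is either a direct specialization or a standard compatibility of pullback with linearization and relative ampleness.
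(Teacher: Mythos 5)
Your proposal is correct and follows essentially the same route as the paper: the $\Lambda$-module case is the Proposition itself, Higgs sheaves are the specialization $H=\Omega^1_{X/S}$, the Dolbeault space for $GL_n$ is a union of connected components of a Higgs moduli space, and the general reductive case is obtained by pulling back $L_{M(GL_n)}$ along the $\Gm$-equivariant finite morphism $\iota_M$ of diagram (\ref{cdhm}). The only quibble is the citation for preservation of relative ampleness under $\iota_M^*$: the Stacks Project tag you quote treats descent of ampleness along finite surjective morphisms, whereas what you need here is the (more elementary, and true) fact that the pullback of a relatively ample line bundle along a finite (indeed affine) morphism is relatively ample.
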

 
 \begin{cor}\la{linearization2} {\rm {\bf ($\Gm$-linearized $((M\times \mathbb A^1)/S)$-ample line bundle)}}
 Let $M/S$ be any of the three moduli spaces in Corollary \ref{linearization}.
 The pull-back $L_{M\times \mathbb A^1}$ of the line bundle $L$ to $M\times \mathbb A^1$ via the $\Gm$-equivariant projection morphism
 $M\times \mathbb A^1 \to M$ (see the forthcoming \S\ref{achyt})  is $((M\times \mathbb A^1)/S)$-ample and it inherits via pull-back
 the $\Gm$-linearization of $L_M$ given by Proposition \ref{linearization}.  The analogous conclusions hold for  the restriction $L_U$ 
 of the line bundle $L_{M\times \mathbb A^1}$ to the open
 $\Gm$-invariant subvariety $U \subseteq M\times \mathbb A^1$ (cf. \S\ref{achyt}).
 \end{cor}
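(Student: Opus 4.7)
The plan is to verify both assertions by combining Corollary \ref{linearization} with standard functoriality of pullback; the $\Gm$-linearization claim and the relative ampleness claim are essentially independent, and each is routine once the $\Gm$-action on $M\times \mathbb A^1$ has been fixed (as will be done in \S\ref{achyt}).

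First, for the $\Gm$-linearization, the projection $p: M\times \mathbb A^1 \to M$ is $\Gm$-equivariant by design: the $\Gm$-action on $\mathbb A^1$ will be chosen in \S\ref{achyt} so that the product action on $M\times \mathbb A^1$ projects to the classical one on $M$. Since the $\Gm$-linearization on $L_M$ is, by definition, a $\Gm$-action on the total space $T(L_M)$ lifting the one on $M$ and linear on fibers, the Cartesian square defining $p^*L_M$ yields a $\Gm$-action on $T(p^*L_M) = (M\times \mathbb A^1) \times_M T(L_M)$ lifting the one on $M\times \mathbb A^1$, by the universal property of fiber products; this is the desired linearization on $L_{M\times \mathbb A^1}$.

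Second, for $((M\times \mathbb A^1)/S)$-ampleness, invoke the fiberwise criterion for relative ampleness of a finite-type morphism: it suffices to show that the restriction of $p^*L_M$ to each closed fiber $M_s \times \mathbb A^1$ is ample. This reduces to the ampleness of $L_M|_{M_s}$ on $M_s$ furnished by Corollary \ref{linearization}: given a cover $\{D_{\sigma_i}\}$ of $M_s$ by principal affine opens arising from sections $\sigma_i$ of some power of $L_M|_{M_s}$, the pulled-back sections produce the cover $\{D_{\sigma_i}\times \mathbb A^1\}$ of $M_s \times \mathbb A^1$ by principal affine opens, so the pullback is ample on the fiber.

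The analogous assertions for the restriction $L_U$ to the $\Gm$-invariant open $U\subseteq M\times \mathbb A^1$ are immediate: the $\Gm$-action on $T(L_{M\times \mathbb A^1})$ restricts to one on $T(L_U)$, giving the linearization, and the fiberwise ampleness criterion restricts from fibers of $M\times \mathbb A^1 \to S$ to their open subsets, the fibers of $U\to S$. The only mild obstacle throughout is the verification that the specific product $\Gm$-action on $M\times \mathbb A^1$ in \S\ref{achyt} makes $p$ equivariant, but this is a modeling choice in the construction of $U$ and presents no genuine difficulty.
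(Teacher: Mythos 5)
Your treatment of the $\Gm$-linearization half is correct and matches what the paper intends: a linearization of $L_M$ is a fiberwise-linear $\Gm$-action on the total space $T(L_M)$ lifting the action on $M$, so the fiber product $T(p^*L_M)=(M\times\mathbb A^1)\times_M T(L_M)$ carries the product action, which lifts the action on $M\times\mathbb A^1$ precisely because the projection $p$ is $\Gm$-equivariant; restricting to the invariant open $U$ is then harmless.

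The gap is in the ampleness half. You invoke ``the fiberwise criterion for relative ampleness of a finite-type morphism,'' but that criterion (EGA III$_1$, 4.7.1 --- the same one the paper uses in the proof of Proposition \ref{sicompv}.(3)) requires the morphism to be \emph{proper}, and $M\times\mathbb A^1\to S$ is not proper. For non-proper finite-type morphisms, ampleness on all closed fibers does not in general imply relative ampleness (already $f$-ampleness of the trivial bundle amounts to quasi-affineness of $f$, which is not a condition one can read off fiber by fiber), so the step as written is unjustified even though its conclusion here is true. The intended argument, which is why the paper states this as an immediate corollary, is different and shorter: the projection $p:M\times\mathbb A^1\to M$ is an \emph{affine} morphism, and the pull-back of an $(M/S)$-ample line bundle along an affine $S$-morphism is $((M\times\mathbb A^1)/S)$-ample --- reduce to $S$ affine, where relative ampleness over $S$ becomes absolute ampleness, and use that ample pulls back to ample along affine (more generally quasi-affine, quasi-compact) morphisms. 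The same fact applied to the quasi-compact open immersion $U\hookrightarrow M\times\mathbb A^1$, which is quasi-affine, yields the $(U/S)$-ampleness of $L_U$; your appeal to a fiberwise criterion is misplaced there for the same reason.
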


\subsection{Construction of the  compactification of Dolbeault moduli spaces}\la{achyt}$\;$

In this subsection, we
 use Proposition \ref{sicompv}, and its notation, to  construct the desired  relative compactification $\overline{M}/S \to \overline{A}/S$ as in diagram
 (\ref{sicomp1}).  The proof
 of Theorem \ref{sicomp} concerning the properties of this construction, can be found in \S\ref{dcv}.

In what follows, we let $0 \in \mathbb A^1$ be the origin on the affine line, and, for every $s\in S$,  we let $o_s \in A_s,$ be the distinguished
(the unique $\Gm$-fixed) point in $A_s$ and we denote by $o:S\to A$ the corresponding section of $A\to S.$ Let $M_o:= S\times_AM \subseteq M$
be the $S$-subvariety of $M$ union of all fibers $M_{o_s} := h_s^{-1} (o_s) \subseteq M_s$ (the nilpotent cone for each $s$).

We let $V:=M \times \mathbb A^1,$  with the $\Gm$-action defined by setting
$t\cdot (m,x):= (t\cdot m, tx).$ The $\Gm$-fixed point set in $V$  sits inside $M=M\times \{0\}$ and coincides
with the $\Gm$-fixed-point set on $M.$
It is immediate to verify that, in this situation,  due to the properness of the Hitchin morphism, we have that $U= (M \times \mathbb A^1)  \setminus ((M \setminus M_o) \times \{0\}).$

We now repeat what above, by replacing $M$ by $A.$
Let $V':= A \times \mathbb A^1,$ endowed with the action
$t\cdot (a,x):= (t\cdot a, tx).$   The $\Gm$-fixed point set in $V'$  sits inside $A=A\times \{0\}$ and coincides
with the $\Gm$-fixed-point set on $A$, which, in turn, is the image $O$ of the  $S$-section $O:S\to A$ given by the 
``origins" $o_s \in A_s,$ $\forall s \in S.$ 
It is immediate to verify that, in this situation, 
 $U'= (A \times \mathbb A^1)  \setminus ((A \setminus O) \times \{0\}).$ 
 
Let $U_0$ be the $S$-variety fiber of the evident morphism $U\to  \mathbb A^1$ over $0\in \mathbb A^1,$ and similarly for $U'_0.$
Let $U^*$ be the $S$-variety pre-image in $U$ of $\Gm \subseteq \mathbb A^1,$ and 
similarly for $U'^*.$ Let $(a:U_0 \to U \leftarrow U^*: b)$ and $(a:U_0 \to U \leftarrow U^*: b)$ be the resulting complementary closed and 
and open  embeddings.
We have the following commutative diagram of DM stacks (all stabilizers are finite cyclic) over $S$:

\beq\la{ret5}
\xymatrix{
&&U_0  \ar[r]^-a  \ar[d]^-h  \ar[ddr] &U \ar[d]^-h \ar[ddr]   &U^*  \ar[d]^-h \ar[l]_-b \ar[ddr] &&
\\
&& U_0'  \ar[r]^-a \ar[ddr]  &U'  \ar[ddr]  &U'^*  \ar[l]_-b \ar[ddr]  &&
\\
&&&[U_0/\Gm] \ar[r]^-a  \ar[d]^-h  \ar[ddr] &  [U/\Gm] \ar[d]^-h  \ar[ddr] & [U^*/\Gm] \ar[d]^-h  \ar[l]_-b \ar[ddr] &
\\
&&&[U'_0/\Gm] \ar[r]^-a  \ar[ddr] &  [U'/\Gm]  \ar[ddr] & [U'^*/\Gm] \ar[l]_-b \ar[ddr] &
\\
Z \ar[r]^-a  \ar[d]^h &\ov{M} \ar[d]^h & M  \ar[d]^h   \ar[l]_-b &:=& U_0/\Gm \ar[r]^-a \ar[d]^-h   &  U/\Gm  \ar[d]^-h   &  U^*/\Gm  \ar[d]^-h  \ar[l]_-b 
\\
W \ar[r]^-a & \ov{A} & A  \ar[l]_-b&:=& U'_0/\Gm  \ar[r]^-a &  U'/\Gm  & U'^*/\Gm,  \ar[l]_-b
}
\eeq
where each of the six  rectangles with arrows $ha=ah$ and $hb=bh$  is Cartesian.  

Note that, by construction, it is clear that $M=U^*/\Gm$ and $A=U'^*/\Gm$.
The two adjacent rectangles on the  bottom l.h.s. are defined
to be the two adjacent rectangles on the bottom r.h.s.

\subsection{A $\Gm$-variation on Luna slice theorem }\la{lst}$\;$

We need the following seemingly standard result in the proof of Theorem \ref{sicomp}.(6). We thank M. Brion for pointing it out to us.

\begin{lm}\la{orb}
{\rm ({\bf Good orbifold charts})}
Let $X$ be an integral  normal  $\Gm$-variety with finite stabilizers such that the geometric quotient $Y:= X/\Gm$ exists and is separated. Then
for every point $x \in X,$  and with image $y \in Y,$
there exists a $\Gm$-stable  affine neighborhood $U_x$ of $x$ in $X,$ an affine neighborhood $V_y$ of $y$ in $Y$
and a commutative diagram: (the top horizontal morphism is induced by the  $\Gm$-action):
\beq\la{mb}
\xymatrix{
\Gm \times^{\Gamma_x} N_x  \ar[r]^-\simeq \ar[d]_-{/\Gm}  & U_x \ar[d]^-{/\Gm} \\
N_x/\Gamma_x \ar[r]^-\simeq & V_y,
}
\eeq
exhibiting $V_y$ as the geometric quotient of $U_x,$ and 
where  $\Gamma_x\subseteq \Gm$ is the stabilizer of $x$ and $N_x\subseteq U_x$ is a $\Gamma_x$-stable closed integral affine --nonsingular if $X$ is nonsingular--
subvariety  of $U_x.$
\end{lm}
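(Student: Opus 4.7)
The plan is to combine Sumihiro's theorem---which produces a $\Gm$-stable affine open neighborhood of any point on a normal $\Gm$-variety---with an explicit construction of the slice $N_x$ as the zero locus of a single weighted equation. In principle Luna's \'etale slice theorem applies ($\Gm$ is reductive, $X$ is normal, and all orbits are closed since $X/\Gm$ is a geometric quotient), but the fact that $\Gamma_x\subseteq\Gm$ is finite cyclic allows one to upgrade the \'etale slice to an honest Zariski-local product decomposition, which is what the statement requires.

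Concretely, choose by Sumihiro a $\Gm$-stable affine open $U^0=\mathrm{Spec}(A)$ of $X$ containing $x$, with the $\Gm$-action giving a grading $A=\bigoplus_n A_n$. Set $d:=|\Gamma_x|$, so $\Gamma_x=\mu_d$. Since $X/\Gm$ is a geometric quotient with only finite stabilizers, every orbit is closed, and in particular $\Gm\cdot x\cong\Gm/\mu_d$ is closed in $U^0$ with coordinate ring $\comp[s,s^{-1}]$ where $s$ has $\Gm$-weight $d$. The graded restriction map $A\to\comp[s,s^{-1}]$ is surjective, so one can pick $f\in A_d$ mapping to $s$; in particular $f(x)\neq 0$. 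Set
\[
U_x:=\mathrm{Spec}(A[f^{-1}]),\qquad N_x:=V(f-1)\subseteq U_x,\qquad V_y:=\mathrm{Spec}(A[f^{-1}]_0).
\]
For $\gamma\in\mu_d$ one has $\gamma\cdot(f-1)=\gamma^d f-1=f-1$, so $N_x$ is $\Gamma_x$-stable, while $U_x$ is $\Gm$-stable by construction.

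The key step is to verify that the action map $\bar\alpha\colon\Gm\times^{\Gamma_x}N_x\to U_x$, $(t,n)\mapsto t\cdot n$, is an isomorphism. On coordinate rings this is the map $A[f^{-1}]\to\bigl(\comp[t,t^{-1}]\otimes_\comp A[f^{-1}]/(f-1)\bigr)^{\mu_d}$ sending a homogeneous $a_n\in A[f^{-1}]_n$ to $t^n\otimes\bar a_n$. Surjectivity follows by lifting each $\mu_d$-isotypic component through a graded piece of $A[f^{-1}]$; injectivity is a graded computation that uses that $f$ is a homogeneous unit of positive weight $d$: a witness $c_n$ satisfying $(f-1)c_n=a_n$ with $a_n$ homogeneous of degree $n$ is forced, degree by degree via the recursion $c_{n,m}=f c_{n,m-d}$ for $m\neq n$, to have infinite graded support unless $c_n=0$, whence $a_n=0$. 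This is the main obstacle: precisely this step is not formal from Luna's theorem and is what produces the exact product decomposition $\Gm\times^{\Gamma_x}N_x$ demanded by the lemma.

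Granted this isomorphism, the rest follows. The commutative square holds because the $\Gm$-quotient of $\Gm\times^{\Gamma_x}N_x$ is $N_x/\Gamma_x$, matching $V_y=U_x/\Gm$, an affine open in $Y$ containing $y$. Reducedness of $N_x$ descends from that of $U_x$ since $\Gamma_x$ acts freely on the $\Gm$-factor, hence freely on $\Gm\times N_x$. Irreducibility: the component of $N_x$ through $x$ is $\Gamma_x$-stable (as $\Gamma_x$ fixes $x$), yet $\Gamma_x$ must permute the components of $N_x$ transitively for the quotient $U_x\cong\Gm\times^{\Gamma_x}N_x$ to be irreducible, forcing $N_x$ to have a single component. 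When $X$ is nonsingular so is $U_x$, and the free $\Gamma_x$-action on $\Gm\times N_x$ transfers smoothness to $N_x$ through the isomorphism.
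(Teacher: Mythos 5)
Your proof is correct, and it reaches the same basic slice $N_x=f^{-1}(1)$ for a $\Gm$-equivariant map $f:U_x\to\Gm$ of weight $d=|\Gamma_x|$, but you manufacture $f$ by a genuinely different mechanism. The paper, after invoking Sumihiro exactly as you do, embeds the affine chart $\Gm$-equivariantly into a linear representation $V=\oplus_i V_i$, sets $d=\gcd\{n_i\}$ of the weights, and takes $f=\prod_i z_i^{a_i}$ with $\sum a_in_i=d$ a B\'ezout relation (restricting to the complement of the coordinate hyperplanes, and projecting first if $x$ lies on some of them); the verification that $\Gm\times^{\Gamma_x}N_x\to U_x$ is an isomorphism is left to the reader. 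You instead exploit that $X/\Gm$ is a geometric quotient, so the orbit $\Gm\cdot x\cong\Gm/\mu_d$ is closed in the affine chart, and lift the weight-$d$ coordinate $s$ of that orbit to a homogeneous $f\in A_d$ via surjectivity of $A\to\comp[s,s^{-1}]$ (legitimate by linear reductivity of $\Gm$, which lets you lift within a single graded piece). This avoids the linearization/projection step and identifies the weight $d$ directly with $|\Gamma_x|$, and your graded computation of injectivity and surjectivity of $A[f^{-1}]\to(\comp[t,t^{-1}]\otimes A[f^{-1}]/(f-1))^{\mu_d}$ supplies precisely the ``standard details'' the paper omits — this is the most valuable part of your write-up. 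Two small points to tighten: the weight of $s$ is a priori $\pm d$, so replace $s$ by $s^{-1}$ if needed to make it positive (positivity of the weight is what makes the support recursion terminate); and in the irreducibility argument, the phrase ``the component through $x$'' presupposes that the components of $N_x$ are disjoint — justify this by noting $\Gm\times N_x$ is \'etale over the normal $U_x$, hence $N_x$ is normal, after which all components would contain $x$ and must therefore coincide.
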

\begin{proof} We limit ourselves to constructing $U_x$ and $N_x,$ leaving the remaining standard details to the reader.
By a theorem of H. Sumihiro's \ci[Thm.1]{su}, $X$ is covered by $\Gm$-invariant open affine subvarieties.
Every such subvariety, call it still $X$, admits a closed $\Gm$-equivariant embedding into a vector space with a linear action:
choose a finite dimensional vector subspace $W \subseteq \comp [X]$ of the coordinate ring of
$X$ which is $\Gm$-stable and generates the $\comp$-algebra $\comp [X]$; then
the corresponding map from $X$ to the dual of $W$ is the
desired embedding.
By considering such an embedding, we are reduced to the case where $X=V$ is a finite dimensional vector space
endowed with a linear $\Gm$-action. Let $V=\oplus_i V_{i}$ be a weight decomposition with weights $n_i.$ 
Let $d:={\rm g.c.d.}\{n_i\}.$ Let $\sum_i a_i n_i=d$ be any linear combination of the weights yielding $d$, subject to 
$a_i\neq0,$ $\forall i.$  
Since $\Gamma_x$ is assumed to be finite, $x\neq 0\in V.$ Let us first assume that  $x \in V$ is not on any coordinate hyperplane $H_i$ (span of the  $V_j$'s 
with $j\neq i$). Then $x=\sum v_i$ for a unique collection $v_i \in V_i$ with $v_i \neq 0,$ $ \forall i.$
Set $U_x:= V\setminus \cup_i H_i$:  it is a $\Gm$-invariant open affine neighborhood of $x.$ The $v_i$ form a basis of $\Gm$-eigenvectors for
$V.$ We define a function $f: U_x \to \Gm$ by sending a vector $u=\sum_i  z_i v_i \mapsto \prod_i z_i^{a_i}.$ The function
$f$ is  $\Gm$-equivariant, provided we endow the target $\Gm$ with the standard weight $d$ $\Gm$-action. Note that $f(x)=1.$ Set $N_x:=
f^{-1}(1).$ If $x$ lies in any multiple intersection of coordinate hyperplanes, we first project to such multiple intersection and then repeat the argument given above.
\end{proof}


\subsection{Proof of  the compactification Theorem \ref{sicomp}}\la{dcv}$\;$

\begin{proof}
The desired Cartesian diagram dwells in the bottom l.h.s. corner of (\ref{ret5}).

Statement (3) on $\Gm$-equivariance is clear by construction.

Statement (4) concerning the morphisms $a$ and $b$ is also clear by construction. 
Since $W$ is the divisor at infinity of a relative projective completion of a  cone, it is Cartier (cf. \ci[Appendix B5]{fu}).
The same is true for its pre-image $Z$.


Statement (5) is clear by construction.

We now prove statement (2)  to the effect that the morphisms $h$ are projective. It is enough to show that $\overline{M}/\overline{A}$ is projective and,
in view of the fact that $\overline{M}/\overline{A}$ is proper, it is enough to produce an $(\overline{M}/\overline{A})$-ample
line bundle (our schemes are quasi compact and quasi separated). In order to do so, we apply Corollary \ref{linearization2} and  Proposition \ref{sicompv}.(3).

We now prove statement (1) to the effect that the  structural morphisms for $\ov{M}, \ov{A}, W$ and $Z$ over $S$ are projective.
It is clear that the  ``relative weighted projective space" $\ov{A}/S$ is projective, as it is the Proj of a suitable graded $\m{O}_S$-algebra
associated with the symmetric $\m{O}_S$-algebra giving $A/S$; see  \S\ref{dms22}. Since $\ov{M}/\ov{A}$ is projective
by assertion (2), we have that the compositum $\ov{M}/S$ is projective (all our schemes are quasi compact and quasi separated). 
It follows that $W/S$ and $Z/S$ are projective as well.

As to assertion (6):
part (a) follows from  Lemma \ref{orb};
part (b) follows from  \ci[Thm. 1.2]{be}.
\end{proof}

\subsection{Comparison with A. Schmitt's Compactification}\la{asco}$\;$

The goal of this section is to observe that in the special case mentioned in Remark \ref{sm}, i.e.  when $X$ is a nonsingular projective curve and we take $GL_n$
Higgs bundles of degree coprime to the rank, then the compactification constructed in Theorem \ref{sicomp}, coincides
with the corresponding moduli of Hitchin pairs constructed by A. Schmitt in \ci{schmitt}. We thank A. Schmitt for providing us 
with the sketch of the needed argument; see the proof of Proposition \ref{ran}. It seems likely that the two compactifications coincide more generally for (untwisted) Dolbeault moduli space  of families of projective manifolds of any dimension; we have not verified this.

Let $X/\comp$ be a nonsingular projective manifold,   let $\mathcal O_X(1)$ be  an ample line bundle on $X$, let $L$ be a line bundle on 
$X$ and let $P$ be a polynomial. 

In the paper \ci{schmitt}, A. Schmitt  introduced the notion of Hitchin  pairs  $(E,\e, \varphi)$ of type $(P,L)$ on $X$:
$E$ is a torsion-free coherent sheaf on $X$, $\varphi: E \to E\otimes L$ is a twisted endomorphism, $\e \in \comp$, and $P$ is the Hilbert polynomial of $(E, \m{O}_X(1))$. 

Note that in the definition  of an Hitchin pair, the twisted endomorphism $\varphi$ is not subject to the
Higgs/Simpson-type vanishing condition $\varphi \wedge \varphi =0$; in particular, the 
 $(E,\varphi)$-component of an Hitchin pair is not necessarily an Higgs sheaf. Since the aforementioned vanishing condition is automatically satisfied when $\dim X=1$, in that case, the component $(E,\varphi)$ of an Hitchin pair yields an Higgs sheaf 
for the group $GL_n$. 

There are the notions of:  equivalent Hitchin pairs;  (semi)stable Hitchin pair; (equivalence classes of) families of Hitchin pairs over a Noetherian scheme $S$; the functors $M^{(s)s}_{L,P}$  of equivalence classes of families of (semi)stable Hitchin pairs of type $(L,P)$.

\ci[Theorem 7.1]{schmitt} shows that there is a projective variety $\m{M}^{ss}_{(L,P)}$,
whose closed points naturally correspond to certain equivalence classes (semistable Hitchin pairs with graded objects that are equivalent Hitchin pairs) of semistable Hitchin pairs of type $(L,P)$. The open subvariety $\m{M}^{s}_{(L,P)}\subseteq \m{M}^{ss}_{(L,P)}$
of stable pairs coarsely represents the functor $M^s_{(L,P)}$.

There is the natural  $\Gm$-action on $\m{M}:= \m{M}^{ss}_{(L,P)}$ given by scalar multiplication  on $\varphi$.
The fixed-point set is the union of: the part that corresponds to semistable Hitchin pairs with $\varphi=0$ (in which case, we must have
$\e \neq 0$, by the very  definition of stability of Hitchin pairs), i.e. the Gieseker moduli space;  the part $\m{M}_\infty$
which corresponds to $\e=0$.

If  we denote by $\m{M}_{\neq 0}$   the $\Gm$-invariant open subvariety corresponding to $\e\neq 0$, then $\m{M}_\infty = \m{M}_{\neq 0} /\!/ \Gm$. 

In the remainder of this section, we place ourselves in the  situation of Remark \ref{sm}:
$GL_n$-Higgs bundles over a projective  connected nonsingular curve  $X$ of genus $g(X) \geq 2$, 
of degree coprime to the rank, and the line bundle $L$ is 
 either the canonical bundle of $X$, or
any fixed line bundle of degree bigger that $2g(X)-2$. 

Then the corresponding Dolbeault Simpson moduli space $M$ coincides with Schmitt's moduli space of Hitchin pairs  $\m{M}_{\neq 0}$,
and in either case semistability coincides with stability (due to the coprimality condition).
There are  a natural proper Hitchin morphism for both moduli spaces and they coincide.

The compactification Theorem \ref{sicomp}.(6) applies to $M$ and we obtain the  compactification $M\subseteq \ov{M}$, with boundary $Z=(M\setminus M_o)/\Gm= M/\!/\Gm$  (cf. \S\ref{achyt}).

A. Schmitt has informed us that, as one may expect, one should have a natural  $\Gm$-equivariant identification
of $(M,Z)$ with $(\m{M}, \m{M}_\infty)$. The resulting identification identifies the corresponding Hitchin morphisms.
See Proposition \ref{ran}. This identification is not used in this paper.

Schmitt's construction of the compactification  is modular (i.e. it provides a modular interpretation of the boundary).
The compactification provided by Theorem \ref{sicomp} has the following extra features: it allows us to prove 
Theorem \ref{sicomp}.(6) and the upcoming Proposition \ref{vcb1},  and is valid for all (families of) projective manifolds and all reductive algebraic groups. 

We thank A. Schmitt for the proof of the following
\begin{pr}\la{ran}
The two compactifications $\ov{M}$ and $\m{M}$ coincide. The identification is $\Gm$-equivariant and the Hitchin morphisms
correspond.
\end{pr}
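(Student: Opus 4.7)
The plan is to construct a natural $\Gm$-equivariant morphism $\Phi: \m{M} \to \ov{M}$ that restricts to the identity on the common open subvariety $M$, and then to show $\Phi$ is an isomorphism intertwining the two Hitchin morphisms.

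First I would exploit the parallel GIT descriptions of the two compactifications. Schmitt's $\m{M}$ is a GIT quotient $\m{R}/\!/G$ of a parameter scheme $\m{R}$ of (semistable) framed Hitchin pairs $(E,\varphi,\e)$ by a reductive group $G$, and it carries a commuting scalar $\Gm$-action $t\cdot(E,\varphi,\e)=(E,t\varphi,t\e)$. Our $\ov{M}$, on the other hand, is built in \S\ref{achyt} by first forming $Q/\!/SL(V)=M$ for Simpson's parameter scheme $Q$ of semistable Higgs bundles and then taking the $\Gm$-quotient (in the sense of Proposition \ref{sicompv}) of the open subset $U\subseteq M\times\mathbb{A}^1$ under the action $t\cdot((E,\varphi),x)=((E,t\varphi),tx)$. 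The hyperplane slice $\{\e=1\}\subseteq \m{R}$ is canonically identified, $G$-equivariantly, with Simpson's $Q$; in particular, Schmitt's open subvariety $\m{M}_{\neq 0}$ is identified with $M$, and this identification is $\Gm$-equivariant for the scalar action.

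Next I would construct $\Phi$ at the parameter-space level via the $(G\times\Gm)$-equivariant morphism $\Psi: \m{R} \to Q \times \mathbb{A}^1$ sending $(E,\varphi,\e)\mapsto(\text{framing of }(E,\varphi),\e)$. When $\e\neq 0$, Schmitt-semistability of $(E,\varphi,\e)$ forces semistability of the underlying Higgs bundle, so $\Psi$ lands in $Q\times\Gm$; when $\e=0$, Schmitt-semistability forces the characteristic coefficients of $\varphi$ to vanish, so that the image of $\Psi$ lies over the nilpotent cone $M_o\subseteq M$. Either way, $\Psi$ lands in the preimage of the open set $U$ from \S\ref{achyt}. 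Taking the $G$-GIT quotient and then the $\Gm$-quotient on both sides (using Corollary \ref{linearization2} to produce the relatively ample, $\Gm$-linearized line bundle required by Proposition \ref{sicompv} on the target side) produces the desired $\Gm$-equivariant morphism $\Phi:\m{M}\to\ov{M}$, which is the identity on $M$ by construction.

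Finally I would verify that $\Phi$ is an isomorphism and that it intertwines the Hitchin morphisms. Both $\m{M}$ and $\ov{M}$ are normal projective varieties (the former by \ci[Theorem 7.1]{schmitt}, the latter by Theorem \ref{sicomp}.(1,6)) and $\Phi$ is proper, birational, and an isomorphism on the dense open $M$; it suffices to check bijectivity on the boundary, i.e.\ to show that $\Phi$ restricts to a bijection $\m{M}_\infty\to Z$. On Schmitt's side $\m{M}_\infty=\m{M}_{\neq 0}/\!/\Gm$ is the $\Gm$-quotient of the proper (in the $\Gm$-action sense) locus in $\m{M}_{\neq 0}=M$; on our side $Z=M_o/\!/\Gm$ by the construction of \S\ref{achyt}; since both $\Gm$-actions agree under the identification $\m{M}_{\neq 0}=M$, and since the $\Gm$-finite-orbit locus of $M$ is exactly the nilpotent cone $M_o$ (this is Remark \ref{lim}), the two boundaries are naturally identified. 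The main obstacle will be carrying out this identification at the level of S-equivalence classes, that is, verifying that Schmitt's S-equivalence for semistable $\e=0$ Hitchin pairs corresponds bijectively to Jordan equivalence modulo the scalar $\Gm$-action on $M_o$; granted this, $\Phi$ is bijective, hence an isomorphism by Zariski's main theorem between normal varieties. Compatibility with the Hitchin morphisms is automatic, since both are induced by the manifestly $\Gm$-equivariant assignment $\varphi\mapsto (f_j(\varphi))_j$ of invariant characteristic coefficients, and this assignment descends compatibly to the weighted projective completion $\ov{A}$ of the Hitchin base described in Remark \ref{wps}.
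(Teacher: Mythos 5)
There is a genuine error at the heart of your construction: you have Schmitt's semistability condition at $\e=0$ exactly backwards, and consequently you misidentify the boundary of $\ov{M}$. For a Hitchin pair $(E,0,\varphi)$, semistability \emph{excludes} nilpotent Higgs fields (a pair with $\e=0$ and $\varphi$ nilpotent is unstable); it does not force the characteristic coefficients of $\varphi$ to vanish. Correspondingly, on the Dolbeault side the open set $U\subseteq M\times\mathbb A^1$ is the complement of $M_o\times\{0\}$: the $\infty$-limit of $(m,0)$ exists precisely when $m$ lies in the proper, $\Gm$-invariant nilpotent cone $M_o$, so $U_0=(M\setminus M_o)\times\{0\}$ and the boundary is $Z=(M\setminus M_o)/\!/\Gm$, not $M_o/\!/\Gm$ --- the nilpotent cone does not escape to infinity and already sits inside $M\subseteq\ov{M}$ (this is stated explicitly in \S\ref{asco}; the displayed formula for $U$ in \S\ref{achyt} contains a typo on this point, which may have misled you). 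With your version of these two statements, the parameter-space morphism $\Psi$ would land \emph{outside} the preimage of $U$ along $\e=0$ and the descent to $\Phi:\m{M}\to\ov{M}$ would fail, and the claimed boundary matching $\m{M}_\infty\leftrightarrow M_o/\!/\Gm$ is false. (Your appeal to Remark \ref{lim} is also off: the $\Gm$-fixed locus is contained in, but in general much smaller than, $M_o$.) Once the direction of the semistability condition is corrected, the two reversals cancel in the sense that $\Psi$ does land where it should, but the boundary identification must be restated as $\m{M}_\infty\leftrightarrow (M\setminus M_o)/\!/\Gm$.

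Even after this correction, your endgame leaves a gap that the paper's argument avoids. You construct only one morphism $\Phi:\m{M}\to\ov{M}$ and invoke Zariski's main theorem, which obliges you to verify bijectivity on the boundary, i.e.\ to match Schmitt's S-equivalence classes of semistable $\e=0$ pairs with closed $\Gm$-orbits in $M\setminus M_o$; you flag this yourself as the main obstacle and do not resolve it. The paper instead constructs morphisms in both directions --- $\ov{M}\to\m{M}$ from the classifying map of the tautological family of Hitchin pairs over a suitable open subset of $R\times\comp$ together with the coarse-moduli property of $\m{M}$, and $\m{M}\to\ov{M}$ from the natural transformation sending a family of semistable Hitchin pairs over a base to a morphism into $(M\times\comp\setminus M_o\times\{0\})/\!/\Gm$ --- and then observes that the two composites restrict to the identity on dense open subsets of the irreducible varieties $\ov{M}$ and $\m{M}$, hence are mutually inverse. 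To complete your proof you should either adopt this two-sided universal-property argument or actually carry out the S-equivalence comparison on the boundary.
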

\begin{proof}

The Simpson moduli space $M$ is a GIT quotient of some parameter space $R$ (see \ci{si1,si2}) which, due to the fact that stability coincides with
semistability, admits a universal family $(E, \varphi)$ of stable  Higgs bundles over it. This family gives rise to   a family
$(E, \e, \varphi, \m{O})$
of  Hitchin pairs over $R \times \comp$ in the sense of \ci{schmitt}. The Hitchin pairs in question are automatically stable over $R\times \comp^*$, but, in order to have stability, one needs to remove from $R\times \{0\}$ the closed subset  where the twisted endomorphisms are nilpotent.

Since stability and semistability coincide,  $\m{M}$ is a coarse moduli space for the functor and, by what above, there is
the classifying morphism $U\to \m{M}$, where $U$ is a suitable open subset of $R\times \comp$. By construction, the morphism
factors through $\ov{M}$, hence a natural morphism $\ov{M} \to \m{M}$, which identifies the two open subsets  $\m{M}_{\neq 0}$
and $M$.

Let $(E, \e, \varphi, N)$ be a family of semistable Hitchin pairs over a Noetherian scheme $S$ (recall that $N$ is a line bundle on $S$ and $\e \in \Gamma (S, N)$).

Let $\{U_i\}_I$ be an open covering of $S$ over which $N$ can be trivialized. By restricting $(E, \varphi)$ over $U_i \times X$, we obtain morphisms $U_i \to M$. By restricting $\e$ and using the trivializations, we obtain morphisms $U_i \to \comp$. We thus get morphisms $U_i \to M \times \comp$. By the definition of semistability of Hitchin pairs (no nilpotent fields are allowed), the image
of such morphisms must lie in the complement of $M_o \times \{0\}$.  We thus obtain morphisms $U_i \to \ov{M} =
M\times \comp /\!/ \Gm$. These morphisms glue and  yield a morphism $S \to \ov{M}$. By the universal property
of $\m{M}$ (cf. \ci[Theorem 7.1.i)]{schmitt}, we obtain a morphism $\ov{M} \to \m{M}$. This morphism also 
identifies the two open subsets $M$ and $\m{M}_{\neq 0}$.

Note that, in general, $M$ is dense in $\ov{M}$ and that $\m{M}_{\neq 0}$ is dense in $\m{M}$; in fact, in the current situation,
$\ov{M}$ and $\m{M}$ are in fact irreducible.
It follows that the two morphisms $\ov{M} \to \m{M}$ and $\m{M} \to \ov{M}$ obtained above, are inverse to each other.
They clearly are $\Gm$-equivariant and also identify $Z$ with $\m{M}_\infty$. The Hitchin morphism are already identified
on  the open sets $M$ and $\m{M}_{\neq 0}$, hence they are identified after the compactifications.
\end{proof}

\subsection{Additional properties of the compactification (\ref{sicomp2}) when $M/S$ is smooth}$\;$

While it is rare for the Dolbeault moduli spaces to be nonsingular, they are so in interesting cases;
see Remark \ref{sm}.
The following proposition summarizes some  topological properties of the compactification given by Theorem \ref{sicomp}.(6), when $M$ is smooth over $S.$

\begin{pr}\la{vcb1} Assume that $S$ is nonsingular and that $M/S$ is smooth.
Let things be as in diagram {\rm (\ref{sicomp2})}. Then we have the following properties:

\ben
\item
The varieties $\ov{M},$ $\ov{A}, Z$ and $W$ are orbifolds and,
for every $s\in S$, so are  the varieties    $\ov{M}_s,\ov{A}_s, Z_s$ and 
 and $W_s.$  
 
In particular,  for all these varieties,  up to the usual  dimensional and Tate shifts,  the intersection complexes  $IC$ and the dualizing complex $\omega$ coincide with the constant sheaf, for example:
 (we ignore the Tate shifts)
\beq\la{fvr}
IC_{\ov{M}} = \rat_{\ov{M}} [\dim \ov{M}], \quad  \omega_{\ov{M}} = \rat_{\ov{M}} [2\dim \ov{M}].
\eeq
\item
We have the following identities for extraordinary pull-backs:
\beq\la{se21}
a^!  \rat_{\ov{M}} =  \rat_{Z} [-2], \quad
i^!   \rat_{\ov{M}} =  \rat_{\ov{M}_s}[-2], \quad
 i^!   \rat_{Z} =  \rat_{Z_s}[-2], \quad
 i^! a^! \rat_{\ov{M}} = a^! i^! \rat_{\ov{M}} = \rat_{Z_s}[-4].
\eeq
so that, up to the appropriate cohomological shift, the complexes in {\rm (\ref{se21})} are perverse semisimple.

\item
Finally,  if $(S,s)$ is a nonsingular curve with a distinguished point on it, then we have the following vanishing  property for the
resulting  vanishing cycle complexes on $\ov{M}_s, \ov{Z}_s$:
\beq\la{va}
\xymatrix{
\phi_{{\ov{M}}/S} (\rat_{{\ov{M}}})=0, &\phi_{Z/S} (a^!\rat_{\ov{M}})=0.
}
\eeq
\item
Conclusions (1,2,3) holds when $M$ is the moduli of Higgs bundles over curves  with degree coprime to the rank
and group  $G=GL_n, SL_n, PGL_n$.

\een
\end{pr}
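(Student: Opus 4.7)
The argument proceeds in four stages, one per part of the statement. The orbifold identifications of (1) are read off directly from Theorem \ref{sicomp}.(6)(a); (2) is a Verdier-duality computation using the Cartier-divisor assertion of Theorem \ref{sicomp}.(4) together with (1); (3) follows because $\overline{M}/S$ and $Z/S$ are, up to étale-local finite quotients, smooth morphisms of schemes over $S$; and (4) reduces to verifying the hypothesis ``$M/S$ smooth'' in the coprime Higgs set-up.

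For (1), Theorem \ref{sicomp}.(6)(a) gives that $\overline{M}, \overline{A}, Z, W$ are orbifolds once $S$ is nonsingular, and that each fiber over $s\in S$ is an orbifold. An orbifold $X$ of complex dimension $d$ is étale-locally a quotient $Y/\Gamma$ of a smooth variety by a finite group, hence a $\rat$-cohomology manifold: the complex $\rat_X[d]$ is then perverse self-dual and supported on each irreducible component, so it coincides with $IC_X$; Poincar\'e--Verdier duality then gives $\omega_X\simeq \rat_X[2d]$ (ignoring Tate twists), which is (\ref{fvr}), and similarly for the fibers.

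For (2), Theorem \ref{sicomp}.(4) says $a:Z\hookrightarrow \overline{M}$ is a regular closed embedding of complex codimension $1$; combining $a^!\omega_{\overline{M}} = \omega_Z$ with the orbifold identifications of (1) yields $a^!\rat_{\overline{M}} \simeq \rat_Z[-2]$. For a nonsingular curve $(S,s)$, the fiber embedding $i:\overline{M}_s\hookrightarrow \overline{M}$ is also a regular closed embedding of complex codimension $1$ between orbifolds, because the orbifold structure on $\overline{M}$ is $S$-relative (the $\Gm$-action in the construction of \S\ref{achyt} covers the trivial action on $S$, so the slices produced by Lemma \ref{orb} are transverse within the fibers of $\overline{M}/S$); the same duality argument gives $i^!\rat_{\overline{M}} \simeq \rat_{\overline{M}_s}[-2]$, and analogously for $i:Z_s\hookrightarrow Z$. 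Composing these two identities yields $i^!a^!\rat_{\overline{M}} = a^!i^!\rat_{\overline{M}} = \rat_{Z_s}[-4]$.

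For (3), the key observation is that $\overline{M}/S$ is étale-locally, up to a finite quotient, a smooth morphism of schemes. Recall from \S\ref{achyt} that $\overline{M} = U/\Gm$ for an open $\Gm$-invariant $U\subseteq M\times \mathbb A^1$; since $M/S$ is smooth by assumption, $U/S$ is smooth. Applying Lemma \ref{orb} to the $\Gm$-action on $U$, we obtain slices $N_x\subseteq U$ which are $\Gamma_x$-stable smooth closed subvarieties with $N_x/\Gamma_x$ étale neighborhoods of points of $\overline{M}$; because the $\Gm$-action covers the trivial action on $S$, the slice $N_x$ is transverse to $\Gm$-orbits inside the fibers $U_s$, so the composition $N_x\to S$ is a smooth morphism of schemes. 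Local acyclicity of smooth morphisms yields $\phi_{N_x/S}(\rat_{N_x}) = 0$. Since the finite quotient $\pi_x:N_x\to N_x/\Gamma_x$ is proper and $(\pi_{x*}\rat_{N_x})^{\Gamma_x} = \rat_{N_x/\Gamma_x}$ realizes $\rat_{N_x/\Gamma_x}$ as a direct summand of $\pi_{x*}\rat_{N_x}$ (averaging, in characteristic zero), and since $\phi$ commutes with proper pushforward, we get $\phi_{(N_x/\Gamma_x)/S}(\rat_{N_x/\Gamma_x}) = 0$. These local vanishings assemble, via the étale orbifold charts, to the global identity $\phi_{\overline{M}/S}(\rat_{\overline{M}}) = 0$. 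Applying the same argument to $Z$, which is itself an orbifold with an analogous $\Gm$-quotient description (being the zero-fiber of $U/\Gm \to \mathbb A^1/\Gm$) that is smooth over $S$ in the same orbifold sense, and using the identification $a^!\rat_{\overline{M}} \simeq \rat_Z[-2]$ from (2), gives the second vanishing in (\ref{va}). The main technical obstacle is this verification that the slice $N_x$ is smooth over $S$, which I expect to be a routine but necessary check using the explicit slice construction in the proof of Lemma \ref{orb}.

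For (4), in the coprime setting of Remark \ref{sm} the Dolbeault moduli space $M/S$ is smooth for each of $G = GL_n, SL_n, PGL_n$: for $GL_n$ and $SL_n$ this is standard (stability coincides with semistability, the moduli problem is fine, and deformation theory yields smoothness); for $PGL_n$, $M$ is the quotient of the $SL_n$-moduli by the finite étale group scheme $\mathrm{Pic}^0_{X/S}[n]$, which acts freely in the coprime case, so smoothness descends. Hence the hypotheses of the proposition are met in all three cases, and (1)--(3) apply.
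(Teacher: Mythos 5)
Your parts (1) and (2) coincide with the paper's proof: both are read off from Theorem \ref{sicomp}.(6)(a) together with the standard orbifold identities $IC_X=\rat_X[\dim X]$, $\omega_X=\rat_X[2\dim X]$ and the compatibility $g^!\omega_B=\omega_A$. For part (3) you take a genuinely different route. The paper never invokes the orbifold charts of Lemma \ref{orb} here; it uses the $S$-smooth resolution $r:\w{\ov{M}}\to\ov{M}$ of Theorem \ref{sicomp}.(6)(b) (which rests on Bergh's destackification), gets $\phi_{\w{\ov{M}}/S}(\rat_{\w{\ov{M}}})=0$ from smoothness, pushes forward along the proper $r$, and then exhibits $\rat_{\ov{M}}$ as a direct summand of $r_*\rat_{\w{\ov{M}}}$ either by the decomposition theorem or directly via the adjunctions $r_!r^!\to \mathrm{id}\to r_*r^*$ and the constancy of the dualizing complexes. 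Your chart-by-chart argument (slices $N_x$ smooth over $S$, local acyclicity, averaging over $\Gamma_x$, locality of $\phi$) is a legitimate alternative that avoids both (6)(b) and the decomposition theorem, at the cost of the transversality check you flag; that check does go through, since the function $f$ cutting out $N_x$ is $\Gm$-equivariant of nonzero weight and the $\Gm$-orbits lie in the fibers of $U/S$, so $f-1$ is a fiberwise submersion along $N_x$. (Note also that the boundary chart description should use $U_0\cong M\setminus M_o$, consistent with \S\ref{asco}, which is indeed open in $M$ and hence smooth over $S$.)

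The genuine gap is in part (4), in the case $G=PGL_n$. You claim that ${\rm Pic}^0_{X/S}[n]$ acts freely on the $SL_n$-moduli space in the coprime case, so that the $PGL_n$-moduli space is smooth over $S$ and the hypotheses of the proposition are satisfied. This is false: a nontrivial $n$-torsion line bundle $L$ fixes every stable Higgs bundle obtained by pushforward from the cyclic spectral cover attached to $L$, and for $g\geq 2$ these fixed (endoscopic) loci are nonempty and positive-dimensional. Consequently the $PGL_n$ Dolbeault moduli space is an orbifold with nontrivial stabilizers, not smooth, and part (4) cannot be reduced to verifying the smoothness hypothesis. This is precisely why (4) is stated as a separate item: the paper deduces the $PGL_n$ case from the $SL_n$ case by observing that the whole diagram for $PGL_n$ is the quotient of the one for $SL_n$ by the finite group scheme ${\rm Pic}^0_{X/S}[n]$ over $S$, and that each conclusion of (1)--(3) --- the orbifold property, the duality identities, and the vanishing of $\phi$ (via the averaging/direct-summand argument for the finite quotient map) --- descends along such a quotient. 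Your treatment of $PGL_n$ needs to be repaired along these lines.
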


\begin{proof}
We prove (1).
The first assertion on orbifolds is Theorem \ref{sicomp} part (6a). The identities (\ref{fvr})  are standard for orbifolds.

We prove (2). The assertions (\ref{se21}) are standard as well. For example: 
\[a^! \rat_{\ov{M}}=
a^! \omega_{\ov{M}}[-2\dim \ov{M}]=  \omega_{Z}[-2\dim \ov{M}] = \rat_{Z}[-2].
\]

We prove (3).
We prove the vanishing assertion  (\ref{va})   for $\ov{M}$.  The one for $Z$ can be proved in the same way.
Since $\w{\ov{M}}/S$ is smooth, we have $\phi_{\w{\ov{M}}}(\rat_{\w{\ov{M}}})=0$. Since $r$ is proper, we have
$\phi_{\ov{M}}(r_*\rat_{\w{\ov{M}}}) = r_* \phi_{\w{\ov{M}}}(\rat_{\w{\ov{M}}})=0$. It remains to show that $\rat_{\ov{M}}$ is a direct summand
of $r_* \rat_{\w{\ov{M}}}$. This follows from the decomposition theorem \ci{bbd}. Given the special orbifold situation, this can also be seen as follows. Consider the adjunction morphisms:
\beq\la{40}
\xymatrix{
r_!r^!  \rat_{\ov{M}} \ar[r]^-x & \rat_{\ov{M}}  \ar[r]^-y & r_*r^*  \rat_{\ov{M}}}
\eeq
Since the dualizing sheaves are constant shifted,   in view of  the identity $\omega_A = g^!\ \omega B$ (valid
for every morphism of varieties $g:A \to B$), 
and in view of the properness of $r$ (so that $r_*=r_!$), we may re-write
(\ref{40}) as:
\beq\la{42}
\xymatrix{
r_*\rat_{\w{\ov{M}}} \ar[r]^x &   \rat_{\ov{M}}  \ar[r]^y & r_* \rat_{\w{\ov{M}}}
}
\eeq
Since $r$ is a resolution, the endomorphism $x\circ y: \rat_{\ov{M}} \to \rat_{\ov{M}}$ can be viewed
as the  identity on a dense open subset, hence it is the identity on the connected  $\ov{M}$. It follows that $\rat_{\ov{M}}$
is a direct summand of $r_* \rat_{\w{\ov{M}}}$, as predicated.

We prove (4). The case when $G=GL_n, SL_n$ is covered by what above because then $M/\SP$ is smooth.
The case when $G=PGL_n$ follows easily from the case when $G=SL_n$ because the whole picture for $PGL_n$ is the quotient of the whole picture for $SL_n$ by the finite group scheme over $\SP$ of $n$-torsion points in the relative Jacobian
of the family of curves.
\end{proof}

\begin{rmk}\la{fjfj}
Proposition \ref{vcb1}.(3) can be used to study the long exact sequence of cohomology
of the triple $(Z, \ov{M}, M)$ and generalize,  by means of (\ref{va}),  the main result in \ci{dema}
in the context of Remark \ref{sm} as follows: the long exact sequence in relative cohomology for the triple $(Z, \ov{M}, M)$  takes the form of a long exact sequence of filtered vector spaces
$\ldots \to (H^{*-2}(Z),P) \to (H^*(\ov{M}),P) \to (H^*(M),P) \to \ldots$, where $P$ stands for
the appropriately shifted perverse Leray filtrations
 This study is carried out in greater generality in a forthcoming paper.
\end{rmk}

 The example below  points to the need of exercising caution in connection with  the vanishing assertion in  Proposition \ref{vcb1}.(\ref{va}). 
 \begin{ex}\la{fo}
Let $v_{\w{X}}:\w{X} \stackrel{r}\to X \stackrel{v_X}\to S$ be such that:  $\w{X}/S$ is  the family proper over a disk $S$ with general member
a smooth quadric surface $F_0$  and with special member  the Hirzebruch surface $F_2$; $r$ is the birational contraction of the $(-2)$-curve
in the central fiber to a point $p$. We have $\phi_{\w{X}}(\oql)=0$, which implies $\phi_X (r_* \oql)=0$;
since $r$ is small, we have $r_*\oql=\m{IC}_X$ (the intersection complex of $X$ placed in cohomological degrees $[0,2]$),  so that
 $\phi_X (\m{IC}_X)=0$. Note however that $\phi \rat_X = \rat_p [-2]\neq 0.$
\end{ex}


\end{document}